\newtheorem{theorem}{Theorem}
\newtheorem{example}[theorem]{Example}
\newtheorem{remark}{Remark}
\newenvironment{proof}[1][Proof]{\noindent\textbf{#1.} }{\ \rule{0.5em}{0.5em}}
\begin{document}

\title{{\large Exact Invariant Solutions for Generalized Invicid Burgers'
Equation with Damping}}
\author{{\large Muhammad Alim Abdulwahhab\smallskip } \\
{\large Department of Mathematics and Statistics}\\
{\large King Fahd University of Petroleum and Minerals}\\
{\large Dhahran 31261, Saudi Arabia.}\\
mwahab@kfupm.edu.sa}
\maketitle

\begin{abstract}
In this work \ we study the Lie group analysis of the equation $%
u_{t}+g\left( u\right) u_{x}+\lambda h\left( u\right) =0$ which is one of
the generalization of the classical invicid Burgers equations $\left(
\lambda =0\right) $. \ Seven inequivalent classes of this generalized
equation were classified and many exact and transformed solutions were
obtained for each class.
\end{abstract}

\section{Introduction}

Damping can be regarded as an effect that tends to stabilize or reduce other
effects in a given system such as friction in a mechanical system. \ Damping
devices are used in almost all the systems that we know, thus the importance
of having an equation that models appropriate damping systems cannot be
overstated. \ In this paper, we shall use Lie symmetry to perform analysis
on a generalized invicid Burgers' equation with damping in the form%
\begin{equation}
u_{t}+g\left( u\right) u_{x}+\lambda h\left( u\right) =0  \label{34}
\end{equation}%
where $\lambda \neq 0$ is a constant and $g\left( u\right) $ and $h\left(
u\right) $ are smooth functions of $u=u(x,t)$ in the domain of definition.
Numerous applicable cases of equation $\left( \ref{34}\right) $ are
available in the literature, for instance, researchers have used it to model
the Gunn effect in semi conductors, rotating thin liquid films, chloride
concentration in the kidney and flow of petroleum in underground
reservioirs, see \cite{7} for all the references. \ If $h\left( u\right)
=O\left( u^{\alpha }\right) $, $\alpha >0$, $0<u<<1$, Murray (see \cite{6})
shows that a finite initial disturbance zero outside a finite range in $x$
decays under certain conditions and for each condition, the asymptotic speed
of propagation of the discontinuity was given together with its role in the
decay process. \ Also for $\lambda =1$, $g=u$ and $h=u\left( u-1\right) $,
equation $\left( \ref{34}\right) $ becomes the limiting case of the
Burgers'-Fishers equation%
\begin{equation*}
u_{t}+uu_{x}+u\left( u-1\right) =\frac{\delta }{2}u_{xx}
\end{equation*}%
which has been used to model many physical phenomenon in Mathematical
Biology and Genetics.\newline
Symmetry analysis based on local transformation group is one of the most
powerful and prolific methods used for solving nonlinear partial
differential equations (PDEs). \ Its application to the study of PDEs was
laid down by Sophus Lie, a Norwegian Mathematician, in the later half of the
nineteenth century. \ A symmetry of a system of differential equations is a
transformation that maps any solution to another solution of the system. \
Such transformations are groups that depend on continuous parameters and
consist of either point transformations, acting on the systems' space of
independent and dependent variables, or, more generally, contact
transformations, acting on the space of independent and dependent variables
as well as on all first derivatives of the dependent variables. Lie showed
that the algebra of all vector fields (infinitesimal generators) that leave
a given system of PDEs invariant could be found by solving over-determined
(or under-determined) auxiliary system of linear homogeneous PDEs, known as
the determining equations of the group. \ The calculations involved in
obtaining these determining equations and their possible solutions are
cumbersome, hence researchers have written codes for implementing Lie's
algorithms in various packages for symbolic computations like Mathematica,
Maple, etc. Though these softwares help in circumventing the tedious tasks
encountered in obtaining similarity solutions, they fall short of providing
any tangible results when the underlying PDE has an arbitrary function like
that in $\left( \ref{34}\right) $. \ The key feature of a Lie group, which
makes it very useful, is the parametric representation of smooth functions
on a continuous open interval in $\epsilon $; the group parameter. This
ensures that the mapping is differentiable and invertible and that the
mapping functions can be expanded in a Taylor series about any value of\ $%
\epsilon $. \ For detail ramifications on the subject, see the monographs 
\cite{2}- \cite{5}.\newline
This work is organized as follows. \ In section two, we carry out the group
analysis of equation $\left( \ref{34}\right) $ and obtain seven inequivalent
classes where both $g\left( u\right) $ and $h\left( u\right) $ are
restricted to order of powers of $u$. \ In section three, we present
theorems on transformed solutions for these classes and show how they can be
use to generate non-trivial many-parameters invariant solutions of equation $%
\left( \ref{34}\right) $ from the trivial solution $u\left( x,t\right) =0$.
\ Section four deals with invariants solutions.while conclusion is presented
in section five.

\section{Group Classification}

The generalized invicid Burgers' equation with damping $\left( \ref{34}%
\right) $ is said to be invariant is invariant under the first order
prolonged symmetry operator $X^{\left( 1\right) }=\xi \frac{\partial }{%
\partial x}+\tau \frac{\partial }{\partial t}+\eta \frac{\partial }{\partial
u}+\eta ^{x}\frac{\partial }{\partial u_{x}}+\eta ^{t}\frac{\partial }{%
\partial u_{t}}$ if and only if%
\begin{equation}
\left. X^{\left( 1\right) }\left[ u_{t}+g\left( u\right) u_{x}+\lambda
h\left( u\right) \right] \right\vert _{\left( \ref{34}\right) }=0  \label{35}
\end{equation}%
where%
\begin{eqnarray*}
\eta ^{x} &=&\eta _{x}+\left( \eta _{u}-\xi _{x}\right) u_{x}-\tau
_{x}u_{t}-\xi _{u}u_{x}^{2}-\tau _{u}u_{x}u_{t}\text{,} \\
\eta ^{t} &=&\eta _{t}-\xi _{t}u_{x}+\left( \eta _{u}-\tau _{t}\right)
u_{t}-\xi _{u}u_{x}u_{t}-\tau _{u}u_{t}^{2}\text{.}
\end{eqnarray*}%
Expanding equation $\left( \ref{35}\right) $ leads to under-estimated system
of partial differential equations in $\xi \left( x,t,u\right) $, $\tau
\left( x,t,u\right) $, and $\phi \left( x,t,u\right) $ which are the
coefficients of symmetry generator to be determined. \ Due to this
predicament, group classifications of equation $\left( \ref{35}\right) $ is
very difficult, if not impossible, to achieve. To circumvent this, we shall
assume $\xi _{u}=\tau _{u}=0$ which , when applied in the expansion of
equation $\left( \ref{35}\right) $, results in the following equations%
\begin{gather}
\lambda h^{\prime }\left( u\right) \phi +\lambda h\left( u\right) \tau
_{t}+\lambda g\left( u\right) h\left( u\right) \tau _{x}-\lambda h\left(
u\right) \phi _{u}+\phi _{t}+g\left( u\right) \phi _{x}=0\text{,}  \label{36}
\\
g^{\prime }\left( u\right) \phi -\xi _{t}-g\left( u\right) \xi _{x}+g\left(
u\right) \tau _{t}+g^{2}\left( u\right) \tau _{x}=0\text{.}  \label{37}
\end{gather}%
Differentiating equation $\left( \ref{37}\right) $ twice and re-arranging,
we get%
\begin{equation}
\phi =\frac{1}{g^{\prime }\left( u\right) }\left( g^{2}\left( u\right)
A\left( x,t\right) +g\left( u\right) B\left( x,t\right) +C\left( x,t\right)
\right)  \label{38}
\end{equation}%
subject to the consistency conditions%
\begin{eqnarray}
\tau _{x} &=&-A\left( x,t\right) \text{,}  \label{39} \\
\xi _{x}-\tau _{t} &=&B\left( x,t\right) \text{,}  \label{40} \\
\xi _{t} &=&C\left( x,t\right) \text{,}  \label{41}
\end{eqnarray}%
where $A\left( x,t\right) $, $B\left( x,t\right) $, and $C\left( x,t\right) $
are arbitrary functions of $x$ and $t$. \ Using equations $\left( \ref{38}%
\right) -\left( \ref{41}\right) $ into $\left( \ref{36}\right) $ leave us
with a very complex equation which is absolutely difficult to solve if $g$
and $h$ are left as arbitrary functions of $u\left( x,t\right) $. \ But much
can be achieved if both $g$ and $h$ are considered as functions of order of $%
u$ which lead us to the following results.

\begin{theorem}
\label{damping}Let $g\left( u\right) =u^{k}$, $h\left( u\right) =u^{m}$
where $k$ and $m$ are real numbers such that $k\neq m$ and both not equal to
zero. \ Then we have the following exclusive (inequivalent) cases for $k$.%
\newline
\textbf{Case 1}. \ For $k\neq \pm \left( m-1\right) $, $k\neq \pm \frac{m-1}{%
2}$, $k\neq $ $\frac{m-1}{3}$ and $m\neq 1$, equation $\left( \ref{34}%
\right) $ admits three dimensional symmetry algebra spanned by the closed
vector fields%
\begin{equation*}
M_{1}=\partial _{x},\text{ \ }M_{2}=\partial _{t},\text{ \ }M_{3}=\frac{k-m+1%
}{k}x\partial _{x}+\frac{1-m}{k}t\partial _{t}+\frac{1}{k}u\partial _{u}%
\text{.}
\end{equation*}%
\textbf{Case 2}. \ For $k=m-1\left( m\neq 1\right) $, equation $\left( \ref%
{34}\right) $ admits eight parameters group of projective transformations
spanned by the closed vector fields $M_{1}$,$M_{2}$,%
\begin{eqnarray*}
M_{3} &=&t\partial _{x}+\lambda \left( m-1\right) t^{2}\partial _{t}+\left( 
\frac{u^{2-m}}{m-1}-2\lambda tu\right) \partial _{u}\text{,\ }M_{4}=\left[
t\partial _{x}+\left( \frac{u^{2-m}}{m-1}-\lambda tu\right) \partial _{u}%
\right] e^{-\lambda \left( m-1\right) x}\text{,} \\
M_{5} &=&\left( \frac{-1}{\lambda \left( m-1\right) }\partial _{t}+\frac{%
u^{m}}{m-1}\partial _{u}\right) e^{\lambda \left( m-1\right) x},\text{ }%
M_{6}=-t\partial _{t}+\frac{u}{m-1}\partial _{u}, \\
M_{7} &=&\left( \frac{-1}{\lambda \left( m-1\right) }\partial _{x}+\frac{u}{%
m-1}\partial _{u}\right) e^{-\lambda \left( m-1\right) x}\text{,} \\
M_{8} &=&\left( \partial _{x}+\lambda \left( m-1\right) t\partial
_{t}-\lambda ^{2}\left( m-1\right) tu^{m}\partial _{u}\right) e^{\lambda
\left( m-1\right) x}\text{.}
\end{eqnarray*}%
\textbf{Case 3}. \ For $k=1-m\left( m\neq 1\right) $, equation $\left( \ref%
{34}\right) $ admits eight parameters group of projective transformations
spanned by the vector fields $M_{1}$,$M_{2}$,%
\begin{eqnarray*}
M_{3} &=&\left( -x^{2}+\frac{\lambda ^{2}\left( 1-m\right) ^{2}}{4}%
t^{4}\right) \partial _{x}-\left( xt+\frac{\lambda \left( 1-m\right) }{2}%
t^{3}\right) \partial _{t} \\
&&+\frac{1}{1-m}\left[ tu^{2-m}+\left( \frac{3}{2}\lambda \left( 1-m\right)
t^{2}-x\right) u+\lambda ^{2}\left( 1-m\right) ^{2}t^{3}u^{m}\right]
\partial _{u}, \\
M_{4} &=&\left( 3\lambda \left( 1-m\right) xt-\frac{\lambda ^{2}\left(
1-m\right) ^{2}}{2}t^{3}\right) \partial _{x}+\left( \frac{3}{2}\lambda
\left( 1-m\right) t^{2}-x\right) \partial _{t} \\
&&+\frac{1}{1-m}\left[ u^{2-m}+\left( 3\lambda \left( 1-m\right) x-\frac{3}{2%
}\lambda ^{2}\left( 1-m\right) ^{2}t^{2}\right) u^{m}\right] \partial _{u},
\\
M_{5} &=&\left( \frac{\lambda \left( 1-m\right) }{2}t^{3}-xt\right) \partial
_{x}-t^{2}\partial _{t}+\frac{1}{1-m}\left[ tu+\left( \frac{3}{2}\lambda
\left( 1-m\right) t^{2}-x\right) u^{m}\right] \partial _{u}, \\
M_{6} &=&t\partial _{x}+\frac{u^{m}}{1-m}\partial _{u},M_{7}=\left( x+\frac{%
\lambda \left( 1-m\right) }{2}t^{2}\right) \partial _{x}+\frac{1}{1-m}\left(
u+\lambda \left( 1-m\right) tu^{m}\right) \partial _{u}, \\
M_{8} &=&\left( x-\frac{\lambda \left( 1-m\right) }{2}t^{2}\right) \partial
_{x}+t\partial _{t}-\lambda tu^{m}\partial _{u}\text{.}
\end{eqnarray*}%
\textbf{Case 4}. \ For $k=\frac{m-1}{2}\left( m\neq 1\right) $, equation $%
\left( \ref{34}\right) $ admits eight parameters group of projective
transformations spanned by the vector field $M_{1}$,$M_{2}$,%
\begin{eqnarray*}
M_{3} &=&\left( xt-\frac{\lambda \left( m-1\right) }{4}x^{3}\right) \partial
_{x}+\left( t^{2}+\frac{\lambda ^{2}\left( m-1\right) ^{2}}{16}x^{4}\right)
\partial _{t} \\
&&+\frac{2}{m-1}\left[ \frac{\lambda ^{2}\left( m-1\right) ^{2}}{4}x^{3}u^{%
\frac{m+1}{2}}-\left( t+\frac{3}{4}\lambda \left( m-1\right) x^{2}\right)
u+tu^{\frac{3-m}{2}}\right] \partial _{u}, \\
M_{4} &=&\left( t-\frac{3}{4}\lambda \left( m-1\right) x^{2}\right) \partial
_{x}-\frac{\lambda ^{2}\left( m-1\right) ^{2}}{4}x^{3}\partial _{t} \\
&&+\frac{2}{m-1}\left[ \frac{3}{4}\lambda ^{2}\left( m-1\right) ^{2}x^{2}u^{%
\frac{m+1}{2}}-\frac{3}{2}\lambda \left( m-1\right) xu+u^{\frac{3-m}{2}}%
\right] \partial _{u}, \\
M_{5} &=&-x^{2}\partial _{x}-\left( xt-\frac{\lambda }{4}\left( m-1\right)
x^{3}\right) \partial _{t}+\frac{2}{m-1}\left[ \left( t+\frac{3}{4}\lambda
\left( m-1\right) x^{2}\right) u^{\frac{m+1}{2}}-xu\right] \partial _{u}, \\
M_{6} &=&x\partial _{x}+\left( t+\frac{\lambda }{4}\left( m-1\right)
x^{2}\right) \partial _{t}-\lambda xu^{\frac{m+1}{2}}\partial _{u}, \\
M_{7} &=&x\partial _{x}+\frac{\lambda }{2}\left( m-1\right) x^{2}\partial
_{t}+\frac{2}{m-1}\left( u-\lambda \left( m-1\right) xu^{\frac{m+1}{2}%
}\right) \partial _{u},\text{ }M_{8}=-x\partial _{x}+\frac{2}{m-1}u^{\frac{%
m+1}{2}}\partial _{u}\text{.}
\end{eqnarray*}%
\textbf{Case 5}. \ For $k=\frac{1-m}{2}\left( m\neq 1\right) $, equation $%
\left( \ref{34}\right) $ admits three parameters group of projective
transformations spanned by the closed vector field%
\begin{equation*}
M_{1},\text{\ }M_{2},\text{\ }M_{3}=\frac{3}{2}x\partial _{x}+t\partial _{t}+%
\frac{1}{1-m}u\partial _{u}\text{.}
\end{equation*}%
\textbf{Case 6}. \ For $k=\frac{m-1}{3}\left( m\neq 1\right) $, equation $%
\left( \ref{34}\right) $ admits three parameters group of projective
transformations spanned by the closed vector field%
\begin{equation*}
M_{1},\text{ }M_{2},\text{\ }M_{3}=-2x\partial _{x}-3t\partial _{t}+\frac{3}{%
m-1}u\partial _{u}\text{.}
\end{equation*}%
\textbf{Case 7}. \ For $m=1\left( k\neq 0\right) $, equation $\left( \ref{34}%
\right) $ admits eight parameters group of projective transformations
spanned by the closed vector field $M_{1}$,$M_{2}$,%
\begin{eqnarray*}
M_{3} &=&\lambda kx^{2}\partial _{x}-x\partial _{t}+\frac{1}{k}\left(
u^{k+1}+2\lambda kxu\right) \partial _{u},\text{ }M_{4}=\left[ -x\partial
_{t}+\frac{1}{k}\left( u^{k+1}+\lambda kxu\right) \partial _{u}\right]
e^{\lambda kt}, \\
M_{5} &=&\frac{1}{k}\left( \frac{-1}{\lambda }\partial _{x}+u^{1-k}\partial
_{u}\right) e^{-\lambda kt},\text{ }M_{6}=x\partial _{x}+\frac{u}{k}\partial
_{u}, \\
M_{7} &=&\frac{1}{k}\left( \frac{-1}{\lambda }\partial _{t}+u\partial
_{u}\right) e^{\lambda kt},\text{ }M_{8}=\lambda k\left( -x\partial _{x}+%
\frac{1}{\lambda k}\partial _{t}+\lambda xu^{1-k}\partial _{u}\right)
e^{-\lambda kt}\text{.}
\end{eqnarray*}
\end{theorem}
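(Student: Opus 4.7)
The plan is to specialize the determining system $(\ref{36})$--$(\ref{41})$ to $g(u)=u^{k}$ and $h(u)=u^{m}$, substitute $(\ref{38})$ into $(\ref{36})$, and then read off the content of the resulting equation by matching powers of $u$. After simplification, the left side of $(\ref{36})$ becomes a linear combination of the seven monomials $u^{2k+1}$, $u^{k+1}$, $u^{m+k}$, $u^{m}$, $u^{m-k}$, $u$, $u^{1-k}$, each with a coefficient that depends only on $(x,t)$ and is built from $A,B,C$ and $\tau_{t}$ (the latter entering only through the $u^{m}$ term). Two of these coefficients are purely algebraic (in $A$ and $C$, with prefactors proportional to $m-2k-1$ and $m+k-1$ respectively), one couples $B$ to $\tau_{t}$ with prefactor $m-1$, and the remaining four are the first-order linear conditions on $A_{x}$, $A_{t}+B_{x}$, $B_{t}+C_{x}$, $C_{t}$.

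Generically, all seven exponents are distinct, so every coefficient must vanish independently; the algebraic equations then force $A=C=0$ and $B$ proportional to $\tau_{t}$. Feeding this back into the consistency relations $(\ref{39})$--$(\ref{41})$ leaves precisely three independent constants of integration, which yield the generators $M_{1},M_{2},M_{3}$ of Case 1. Two of the seven exponents coincide if and only if one of the identities $k=\pm(m-1)$, $2k=\pm(m-1)$, $3k=m-1$, or $m=1$ is satisfied, and these are exactly the exclusive restrictions defining Cases 2--7; in each such case, two (or more) coefficients merge before being set to zero, weakening the algebraic constraints and permitting richer solutions.

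For each exceptional case I would carry out the same three-step program: (i) add the coefficients whose monomials coincide before setting the sum to zero, (ii) solve the reduced mixed algebraic/differential system for $A,B,C,\tau_{t}$, and (iii) integrate via $(\ref{39})$--$(\ref{41})$ to recover $\xi,\tau$, then read off $\phi$ from $(\ref{38})$. Cases 5 and 6 relax only a single algebraic relation and produce just one extra scaling generator beyond the translations $M_{1},M_{2}$. In Cases 2, 3, 4 and 7, enough algebraic constraints collapse that the compatibility conditions $\tau_{xt}=\tau_{tx}$ and $\xi_{xt}=\xi_{tx}$ become the binding restrictions on $\xi$ and $\tau$; these reduce to ordinary differential equations in $x$ or $t$ whose polynomial solution space is six-dimensional, and together with the remaining integration constants produce the projective-type eight-dimensional algebras listed.

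The main obstacle is the bookkeeping in Cases 3 and 4, where the generators $M_{3},M_{4},M_{5}$ depend cubically or quartically on $x$ and $t$ and nonlinearly on $u$ through terms such as $u^{(m\pm 1)/2}$ or $u^{2-m}$. Producing these generators cleanly requires integrating a third-order consistency PDE for $\xi$, distributing the eight free constants among $\xi,\tau,\phi$ in a way that respects $(\ref{38})$, and then verifying that each listed $M_{i}$ satisfies the full determining equation $(\ref{36})$ and not merely the separated coefficient conditions. Once this verification is complete for the largest cases, the intermediate Cases 2 and 7 follow by essentially the same computation with fewer algebraic coincidences to track, and Cases 1, 5, 6 are comparatively immediate.
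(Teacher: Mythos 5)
Your plan follows the paper's own route: substitute $g=u^{k}$, $h=u^{m}$ and $(\ref{38})$ into $(\ref{36})$, split the result by the seven powers of $u$ (the paper's equation $(\ref{42})$), conclude the generic three-parameter algebra from the vanishing of all coefficients, and treat the exceptional values---exactly the exponent coincidences you list, which reproduce the theorem's case division---by merging coefficients and integrating the consistency conditions $(\ref{39})$--$(\ref{41})$, which is precisely how the paper proves Case 1, carries out Cases 2 and 7 in detail, and disposes of Cases 3--6 by analogy or by specializing $(\ref{43})$. The only quibble is descriptive: in Cases 2 and 7 the six extra dimensions come from reduced ODEs whose solutions are exponentials $e^{\pm\lambda(m-1)x}$, respectively $e^{\pm\lambda kt}$, not polynomials (that description fits Cases 3 and 4), but this does not affect the soundness of the outlined computation.
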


\begin{proof}
Using $g=u^{k}$, $h=u^{m}$ and equation $\left( \ref{38}\right) $ in $\left( %
\ref{36}\right) $ and simplifying we get%
\begin{gather}
\frac{1}{k}A_{x}u^{2k+1}+\lambda \left[ \frac{1}{k}\left( m-k-1\right)
A+\tau _{x}\right] u^{k+m}+\lambda \left[ \frac{1}{k}\left( m-1\right)
B+\tau _{t}\right] u^{m}  \notag \\
+\frac{\lambda }{k}\left( m+k-1\right) Cu^{m-k}+\frac{1}{k}\left(
A_{t}+B_{x}\right) u^{k+1}+\frac{1}{k}\left( B_{t}+C_{x}\right) u+\frac{1}{k}%
C_{t}u^{1-k}=0\text{.}  \label{42}
\end{gather}%
\textbf{Case 1}. \ Since all the coefficient in $\left( \ref{42}\right) $
are independent of $u$, we have $\frac{1}{k}\left( m-k-1\right) A+\tau
_{x}=0 $ which implies $A=0$ using equation $\left( \ref{39}\right) $, thus $%
\tau _{x}=0$ and $B_{x}=0$. \ Also $C=0$ implies $\xi _{t}=0$ and $B_{t}=0$
and so $B=\alpha _{3}$, a constant. \ Therefore, the required infinitesimal
transformations are%
\begin{equation}
\xi =\frac{k-m+1}{k}\alpha _{3}x+\alpha _{1},\text{ }\tau =\frac{1-m}{k}%
\alpha _{3}t+\alpha _{2},\text{ }\phi =\frac{1}{k}\alpha _{3}u  \label{43}
\end{equation}%
which proves the result.\newline
\textbf{Case 2}. \ Under this case, equation $\left( \ref{42}\right) $
becomes%
\begin{multline*}
\frac{1}{m-1}\left( A_{x}+\lambda \left( m-1\right) \tau _{x}\right)
u^{2m-1}+\frac{1}{m-1}\left[ \lambda \left( m-1\right) \left( B+\tau
_{t}\right) +A_{t}+B_{x}\right] u^{m} \\
+\frac{1}{m-1}\left( 2\lambda \left( m-1\right) C+B_{t}+C_{x}\right) u+\frac{%
1}{m-1}C_{t}u^{2-m}=0\text{,}
\end{multline*}%
which holds if and only if 
\begin{gather}
C\left( x,t\right) =C\left( x\right) \text{,}  \notag \\
A_{x}-\lambda \left( m-1\right) A=0\text{,}  \label{44} \\
\lambda \left( m-1\right) \xi _{x}+A_{t}+B_{x}=0\text{,}  \label{45} \\
2\lambda \left( m-1\right) C+B_{t}+C_{x}=0\text{.}  \label{46}
\end{gather}%
From $\left( \ref{41}\right) $, $\left( \ref{45}\right) $ and $\left( \ref%
{46}\right) $, we have%
\begin{gather*}
B=-\left( C^{\prime }\left( x\right) +2\lambda \left( m-1\right) C\right)
t+a\left( x\right) \text{,} \\
A=\frac{1}{2}\left[ C^{\prime \prime }\left( x\right) +\lambda \left(
m-1\right) C^{\prime }\left( x\right) \right] t^{2}-\left[ \lambda \left(
m-1\right) b^{\prime }\left( x\right) +a^{\prime }\left( x\right) \right]
t+d\left( x\right) \text{,}
\end{gather*}%
where $a\left( x\right) $, $b\left( x\right) $ and $d\left( x\right) $ are
abitrary functions. \ Consistency criterion for $A$ in equations $\left( \ref%
{44}\right) $ results into%
\begin{gather}
C^{\prime \prime \prime }\left( x\right) -\lambda ^{2}\left( m-1\right)
^{2}C^{\prime }\left( x\right) =0\text{,}  \notag \\
\lambda ^{2}\left( m-1\right) ^{2}b^{\prime }\left( x\right) -\lambda \left(
m-1\right) b^{\prime \prime }\left( x\right) +\lambda \left( m-1\right)
a^{\prime }\left( x\right) -a^{\prime \prime }\left( x\right) =0\text{,}
\label{47} \\
d^{\prime }\left( x\right) -\lambda \left( m-1\right) d\left( x\right) =0%
\text{.}  \notag
\end{gather}%
Using $\left( \ref{47}\right) $ with $\left( \ref{39}\right) $ and $\left( %
\ref{40}\right) $, and solving the resulting equations that follow, we get
the infinitesimal transformations to be%
\begin{eqnarray*}
\xi &=&\left( \alpha _{1}+\alpha _{2}e^{-\lambda \left( m-1\right) x}\right)
t-\frac{\alpha _{5}}{\lambda \left( m-1\right) }e^{-\lambda \left(
m-1\right) x}+\alpha _{7}e^{\lambda \left( m-1\right) x}+\alpha _{6}\text{,}
\\
\tau &=&\lambda \left( m-1\right) \alpha _{1}t^{2}-\frac{\alpha _{3}}{%
\lambda \left( m-1\right) }e^{\lambda \left( m-1\right) x}+\left( -\alpha
_{4}+\lambda \left( m-1\right) \alpha _{7}e^{\lambda \left( m-1\right)
x}\right) t+\alpha _{8}\text{,} \\
\phi &=&\frac{1}{m-1}\left[ 
\begin{array}{c}
\left( \alpha _{3}-\lambda ^{2}\left( m-1\right) ^{2}\alpha _{7}t\right)
u^{m}e^{\lambda \left( m-1\right) x} \\ 
+\left( -2\lambda \left( m-1\right) \alpha _{1}t+\alpha _{4}+\left( -\lambda
\left( m-1\right) \alpha _{2}t+\alpha _{5}\right) e^{-\lambda \left(
m-1\right) x}\right) u \\ 
+\left( \alpha _{1}+\alpha _{2}e^{-\lambda \left( m-1\right) x}\right)
u^{2-m}%
\end{array}%
\right] \text{.}
\end{eqnarray*}%
The Lie algebras are obtained from these infinitesimals. \ The closure of
these algebras is shown in the following commutator table where $a=\lambda
\left( m-1\right) $ and $V=aM_{6}-2M_{1}$.\bigskip \newline
$%
\begin{tabular}{|c|c|c|c|c|c|c|c|c|}
\hline
& $M_{1}$ & $M_{2}$ & $M_{3}$ & $M_{4}$ & $M_{5}$ & $M_{6}$ & $M_{7}$ & $%
M_{8}$ \\ \hline
$M_{1}$ & $0$ & $0$ & $0$ & $-aM_{4}$ & $aM_{5}$ & $0$ & $-aM_{7}$ & $aM_{8}$
\\ \hline
$M_{2}$ & $0$ & $0$ & $M_{1}-2aM_{6}$ & $-aM_{7}$ & $0$ & $-M_{2}$ & $0$ & $%
-a^{2}M_{5}$ \\ \hline
$M_{3}$ & $0$ & $-M_{1}+2aM_{6}$ & $0$ & $0$ & $\frac{M_{8}}{a}$ & $M_{3}$ & 
$M_{4}$ & $0$ \\ \hline
$M_{4}$ & $aM_{4}$ & $aM_{7}$ & $0$ & $0$ & $\frac{M_{1}}{a}+M_{6}$ & $M_{4}$
& $0$ & $aM_{3}$ \\ \hline
$M_{5}$ & $-aM_{5}$ & $0$ & $-\frac{M_{8}}{a}$ & $-\frac{M_{1}}{a}-M_{6}$ & $%
0$ & $-M_{5}$ & $-\frac{M_{2}}{a}$ & $0$ \\ \hline
$M_{6}$ & $0$ & $M_{2}$ & $-M_{3}$ & $-M_{4}$ & $M_{5}$ & $0$ & $0$ & $0$ \\ 
\hline
$M_{7}$ & $aM_{7}$ & $0$ & $-M_{4}$ & $0$ & $\frac{M_{2}}{a}$ & $0$ & $0$ & $%
V$ \\ \hline
$M_{8}$ & $-aM_{8}$ & $a^{2}M_{5}$ & $0$ & $-aM_{3}$ & $0$ & $0$ & $-V$ & $0$
\\ \hline
\end{tabular}%
\bigskip \medskip $\newline
\textbf{Case 7}. \ We have from $\left( \ref{42}\right) $,%
\begin{gather}
A=A\left( t\right) \text{,}  \notag \\
A^{\prime }\left( t\right) -2\lambda kA\left( t\right) +B_{x}=0\text{,}
\label{7.1} \\
\lambda k\tau _{t}+B_{t}+C_{x}=0\text{,}  \label{7.2} \\
\lambda kC+C_{t}=0\text{.}  \label{7.3}
\end{gather}%
Equations $\left( \ref{39}\right) $, $\left( \ref{7.1}\right) $ and $\left( %
\ref{7.2}\right) $ respectively imply%
\begin{eqnarray*}
\tau &=&-A\left( t\right) x+p\left( t\right) \text{,} \\
B &=&\left( 2\lambda kA\left( t\right) -A^{\prime }\left( t\right) \right)
x+q\left( t\right) \text{,} \\
C &=&\frac{1}{2}\left( A^{\prime \prime }\left( t\right) -\lambda kA^{\prime
}\left( t\right) \right) x^{2}-\left( \lambda kp^{\prime }\left( t\right)
+q^{\prime }\left( t\right) \right) x+r\left( t\right) \text{,}
\end{eqnarray*}%
where $p\left( t\right) $, $q\left( t\right) $ and $r\left( t\right) $ are
abitrary functions. \ Hence equation $\left( \ref{7.3}\right) $ gives%
\begin{gather}
A\left( t\right) =\alpha _{1}+\alpha _{2}e^{\lambda kt}+\alpha e^{-\lambda
kt}\text{,}  \notag \\
\lambda k\left( p^{\prime \prime }\left( t\right) +\lambda kp^{\prime
}\left( t\right) \right) +q^{\prime \prime }\left( t\right) +\lambda
kq^{\prime }\left( t\right) =0\text{,}  \label{7.4} \\
r\left( t\right) =\alpha _{3}e^{-\lambda kt}\text{.}  \notag
\end{gather}%
Equations $\left( \ref{40}\right) $, $\left( \ref{41}\right) $, $\left( \ref%
{7.4}\right) $ and their stability criteria lead to%
\begin{eqnarray*}
\alpha &=&0 \\
p\left( t\right) &=&\alpha _{6}+\alpha _{7}e^{-\lambda kt}-\frac{\alpha _{5}%
}{\lambda k}e^{\lambda kt} \\
q\left( t\right) &=&\alpha _{4}+\alpha _{5}e^{\lambda kt} \\
B\left( x,t\right) &=&\left( 2\lambda k\alpha _{1}+\lambda k\alpha
_{2}e^{\lambda kt}\right) x+\alpha _{4}+\alpha _{5}e^{\lambda kt} \\
C\left( x,t\right) &=&\alpha _{3}e^{-\lambda kt}+\left( \lambda k\right)
^{2}\alpha _{7}e^{-\lambda kt}x
\end{eqnarray*}%
Thus, the coefficients of the symmetry generator are%
\begin{eqnarray*}
\xi &=&\lambda k\alpha _{1}x^{2}+\left( \alpha _{4}-\lambda k\alpha
_{7}e^{-\lambda kt}\right) x-\frac{\alpha _{3}}{\lambda k}e^{-\lambda
kt}+\alpha _{8}\text{,} \\
\tau &=&-\left( \alpha _{1}+\alpha _{2}e^{\lambda kt}\right) x-\frac{\alpha
_{5}}{\lambda k}e^{\lambda kt}+\alpha _{6}+\alpha _{7}e^{-\lambda kt}\text{,}
\\
\phi &=&\frac{1}{k}\left[ 
\begin{array}{c}
\left( \alpha _{1}+\alpha _{2}e^{\lambda kt}\right) u^{k+1}+\left( \lambda
k\left( 2\alpha _{1}+\alpha _{2}e^{\lambda kt}\right) x+\alpha _{4}+\alpha
_{5}e^{\lambda kt}\right) u \\ 
+\left( \alpha _{3}+\left( \lambda k\right) ^{2}\alpha _{7}x\right)
e^{-\lambda kt}u^{1-k}%
\end{array}%
\right] \text{.}
\end{eqnarray*}%
The Lie algebras are obtained from the infinitesimals above and their
closure is shown in the following commutator table, $N=M_{2}-2\lambda kM_{6}$%
.\bigskip \newline
$%
\begin{tabular}{|c|c|c|c|c|c|c|c|c|}
\hline
& $M_{1}$ & $M_{2}$ & $M_{3}$ & $M_{4}$ & $M_{5}$ & $M_{6}$ & $M_{7}$ & $%
M_{8}$ \\ \hline
$M_{1}$ & $0$ & $0$ & $-N$ & $\lambda kM_{7}$ & $0$ & $M_{1}$ & $0$ & $%
\lambda ^{2}k^{2}M_{5}$ \\ \hline
$M_{2}$ & $0$ & $0$ & $0$ & $\lambda kM_{4}$ & $-\lambda kM_{5}$ & $0$ & $%
\lambda kM_{7}$ & $-\lambda kM_{8}$ \\ \hline
$M_{3}$ & $N$ & $0$ & $0$ & $0$ & $-M_{8}$ & $-M_{3}$ & $-M_{4}$ & $0$ \\ 
\hline
$M_{4}$ & $-\lambda kM_{7}$ & $-\lambda kM_{4}$ & $0$ & $0$ & $-\frac{M_{2}}{%
\lambda k}-M_{6}$ & $-M_{4}$ & $0$ & $-\lambda kM_{3}$ \\ \hline
$M_{5}$ & $0$ & $\lambda kM_{5}$ & $M_{8}$ & $\frac{M_{2}}{\lambda k}+M_{6}$
& $0$ & $M_{5}$ & $\frac{M_{1}}{\lambda k}$ & $0$ \\ \hline
$M_{6}$ & $-M_{1}$ & $0$ & $M_{3}$ & $M_{4}$ & $-M_{5}$ & $0$ & $0$ & $0$ \\ 
\hline
$M_{7}$ & $0$ & $-\lambda kM_{7}$ & $M_{4}$ & $0$ & $-\frac{M_{1}}{\lambda k}
$ & $0$ & $0$ & $2M_{2}-\lambda kM_{6}$ \\ \hline
$M_{8}$ & $-\lambda ^{2}k^{2}M_{5}$ & $\lambda kM_{8}$ & $0$ & $\lambda
kM_{3}$ & $0$ & $0$ & $-2M_{2}+\lambda kM_{6}$ & $0$ \\ \hline
\end{tabular}%
\bigskip $\newline
Cases $5$ and $6$ can be obtained by substituting respective values of $k$
in $\left( \ref{43}\right) $ while the remaining cases can be proved in a
similar way as did in case two.
\end{proof}

\section{Group Transformations}

Each of the infinitesimal generators $M_{i}$ in theorem \ref{damping} above
can be used to obtain corresponding one-parameter $\left( \varepsilon
\right) $ Lie group of point transformations $G_{i}$ through exponentiation.
\ To achieve this, we solved the following system of differential equations%
\begin{eqnarray*}
\frac{d\widetilde{x_{k}}\left( \varepsilon \right) }{d\varepsilon } &=&\xi
_{k}\left( \widetilde{x_{k}}\left( \varepsilon \right) ,\widetilde{u}\left(
\varepsilon \right) \right) \text{,} \\
\frac{d\widetilde{u}\left( \varepsilon \right) }{d\varepsilon } &=&\phi
\left( \widetilde{x_{k}}\left( \varepsilon \right) ,\widetilde{u}\left(
\varepsilon \right) \right) \text{,}
\end{eqnarray*}%
subject to the initial conditions%
\begin{equation*}
\left. \left( \widetilde{x_{k}},\widetilde{u}\right) \right\vert
_{\varepsilon =0}=\left( x_{k},u\right)
\end{equation*}%
for all $X_{i}:i=1,2,...,8$ and obtained the following (in addition to $%
G_{1}:\left( x,t,u\right) \longmapsto \left( x+\varepsilon ,t,u\right) $ and 
$G_{2}:\left( x,t,u\right) \longmapsto \left( x,t+\varepsilon ,u\right) $):%
\newline
\textbf{Case 1}\textit{.}\medskip \newline
$G_{3}:\left( x,t,u\right) \longmapsto \left( e^{\frac{k-m+1}{k}\varepsilon
}x,e^{\frac{1-m}{k}\varepsilon }t,e^{\frac{\varepsilon }{k}}u\right) $%
.\medskip \newline
\textbf{Case 2}\textit{\ }$\left( a=\lambda \left( m-1\right) \right) $%
.\medskip \newline
$G_{3}:\left( x,t,u\right) \longmapsto \left( x-\ln \sqrt[a]{1-a\varepsilon t%
},\frac{t}{1-a\varepsilon t},\sqrt[m-1]{\varepsilon \left( 1-a\varepsilon
t\right) +\left( 1-a\varepsilon t\right) ^{2}u^{m-1}}\right) $,\medskip $%
\newline
G_{4}:\left( x,t,u\right) \longmapsto \left( \ln \sqrt[a]{a\varepsilon
t+e^{ax}},t,\sqrt[m-1]{\frac{\varepsilon e^{-ax}+u^{m-1}}{1+a\varepsilon
te^{-ax}}}\right) $,\medskip $\newline
G_{5}:\left( x,t,u\right) \longmapsto \left( x,t-\frac{\varepsilon e^{ax}}{a}%
,\sqrt[1-m]{\varepsilon e^{ax}+u^{1-m}}\right) $,\medskip $\newline
G_{6}:\left( x,t,u\right) \longmapsto \left( x,e^{-\varepsilon }t,e^{\frac{%
\varepsilon }{m-1}}u\right) $,\medskip $\newline
G_{7}:\left( x,t,u\right) \longmapsto \left( \ln \sqrt[a]{-\varepsilon
+e^{ax}},t,e^{\lambda x}\sqrt[1-m]{-\varepsilon +e^{ax}}u\right) $,\medskip $%
\newline
G_{8}:\left( x,t,u\right) \longmapsto \left( -\ln \sqrt[a]{-a\varepsilon
+e^{-ax}},\frac{t}{1-a\varepsilon e^{ax}},\sqrt[1-m]{\frac{a^{2}\varepsilon t%
}{a\varepsilon -e^{-ax}}+u^{1-m}}\right) $.\medskip $\newline
$\textbf{Case 3}\textit{\ }$\left( a=\lambda \left( 1-m\right) \right) $%
.\medskip $\newline
G_{6}:\left( x,t,u\right) \longmapsto \left( x+\varepsilon t,t,\sqrt[1-m]{%
\varepsilon +u^{1-m}}\right) $,\medskip $\newline
G_{7}:\left( x,t,u\right) \longmapsto \left( \left( x+\frac{a}{2}%
t^{2}\right) e^{\varepsilon }-\frac{a}{2}t^{2},t,\sqrt[1-m]{e^{\varepsilon
}\left( u^{1-m}+2at\right) -2at}\right) $,\medskip $\newline
G_{8}:\left( x,t,u\right) \longmapsto \left( \left[ x+\frac{a}{2}t^{2}\left(
1-e^{\varepsilon }\right) \right] e^{\varepsilon },e^{\varepsilon }t,\sqrt[%
1-m]{a\left( 1-e^{\varepsilon }\right) t+u^{1-m}}\right) $.\medskip $\newline
$\textbf{Case 4}\textit{\ }$\left( a=\frac{\lambda \left( m-1\right) }{2}%
\right) $.\medskip $\newline
G_{6}:\left( x,t,u\right) \longmapsto \left( e^{\varepsilon }x,\left[ t+%
\frac{a}{2}\left( e^{\varepsilon }-1\right) x^{2}\right] e^{\varepsilon },%
\left[ a\left( e^{\varepsilon }-1\right) x+u^{\frac{1-m}{2}}\right] ^{\frac{2%
}{1-m}}\right) $,\medskip $\newline
G_{7}:\left( x,t,u\right) \longmapsto \left( e^{\varepsilon }x,t+\frac{a}{2}%
\left( e^{2\varepsilon }-1\right) x^{2},\left[ \left( a\left(
e^{2\varepsilon }-1\right) x+u^{\frac{1-m}{2}}\right) e^{-\varepsilon }%
\right] ^{\frac{2}{1-m}}\right) $,\medskip $\newline
G_{8}:\left( x,t,u\right) \longmapsto \left( e^{-\varepsilon }x,t,\left(
-\varepsilon +u^{\frac{1-m}{2}}\right) ^{\frac{2}{1-m}}\right) $.\medskip $%
\newline
$\textbf{Case 5}\textit{.}\medskip $\newline
G_{3}:\left( x,t,u\right) \longmapsto \left( e^{\frac{3}{2}\varepsilon
}x,e^{\varepsilon }t,e^{\frac{3}{1-m}\varepsilon }u\right) $.\medskip $%
\newline
$\textbf{Case 6}\textit{.}\medskip $\newline
G_{3}:\left( x,t,u\right) \longmapsto \left( e^{-2\varepsilon
}x,e^{-3\varepsilon }t,e^{\frac{3}{m-1}\varepsilon }u\right) $.\medskip $%
\newline
$\textbf{Case 7}\textit{.}\medskip $\newline
G_{3}:\left( x,t,u\right) \longmapsto \left( \frac{x}{1-\lambda \varepsilon
kx},t-\frac{\ln \left( 1-\lambda \varepsilon kx\right) }{\lambda k},\frac{u}{%
\sqrt[k]{1-\lambda \varepsilon kx-\varepsilon xu^{k}}}\right) $,\medskip $%
\newline
G_{4}:\left( x,t,u\right) \longmapsto \left( x,-\frac{\ln \left( \lambda
\varepsilon kx+e^{-\lambda kt}\right) }{\lambda k},\sqrt[k]{\frac{1+\lambda
\varepsilon kxe^{\lambda kt}}{1-\varepsilon e^{\lambda kt}u^{k}}}u\right) $%
,\medskip $\newline
G_{5}:\left( x,t,u\right) \longmapsto \left( x-\frac{\varepsilon }{\lambda k}%
e^{-\lambda kt},t,\sqrt[k]{\varepsilon e^{-\lambda kt}+u^{k}}\right) $%
,\medskip $\newline
G_{6}:\left( x,t,u\right) \longmapsto \left( xe^{\varepsilon },t,e^{\frac{%
\varepsilon }{k}}u\right) $,\medskip $\newline
G_{7}:\left( x,t,u\right) \longmapsto \left( x,-\frac{\ln \left(
-\varepsilon +e^{-\lambda kt}\right) }{\lambda k},\sqrt[k]{1-\varepsilon
e^{\lambda kt}}u\right) $,\medskip $\newline
G_{8}:\left( x,t,u\right) \longmapsto \left( \frac{x}{1+\lambda \varepsilon
ke^{-\lambda kt}},\frac{\ln \left( \lambda \varepsilon k+e^{\lambda
kt}\right) }{\lambda k},\sqrt[k]{\frac{\lambda ^{2}\varepsilon
k^{2}xe^{-\lambda kt}}{1+\lambda \varepsilon ke^{-\lambda kt}}+u^{k}}\right) 
$.\medskip $\newline
$From all the above calculated one parameter Lie groups of transformations,
the following theorems on transformed solutions hold true.

\begin{theorem}
If $u=\varphi \left( x,t\right) $ is an invariant solution of $\left( \ref%
{34}\right) $ obtained through the generators $M_{1}$ and $M_{2}$, then so
are the following functions:\newline
$G_{1}.\varphi \left( x,t\right) =\varphi \left( x-\varepsilon ,t\right) $,%
\newline
$G_{2}.\varphi \left( x,t\right) =\varphi \left( x,t-\varepsilon \right) $.
\end{theorem}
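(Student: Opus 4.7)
The plan is to invoke the standard principle that a one-parameter symmetry group of a PDE carries solutions to solutions, and then to unpack what this means concretely for the pure translations $G_1$ and $G_2$. Since $M_1 = \partial_x$ and $M_2 = \partial_t$ are admitted generators in every one of the seven cases of Theorem \ref{damping}, their exponentiated flows, listed at the start of Section 3 as $G_1:(x,t,u)\mapsto(x+\varepsilon,t,u)$ and $G_2:(x,t,u)\mapsto(x,t+\varepsilon,u)$, are symmetries of equation (\ref{34}).

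First, I would recall the action of a point symmetry on solutions: if $u=\varphi(x,t)$ solves (\ref{34}) and $(\tilde x,\tilde t,\tilde u)=G_i(x,t,u)$, then the function $\tilde u(\tilde x,\tilde t)$ defined implicitly by $\tilde u=\varphi(x,t)$ under inversion of the change of variables is also a solution of (\ref{34}). For $G_1$ the inverse change of variables is $x=\tilde x-\varepsilon$, $t=\tilde t$, $u=\tilde u$, giving $\tilde u(\tilde x,\tilde t)=\varphi(\tilde x-\varepsilon,\tilde t)$; relabelling the independent variables yields the claimed solution $\varphi(x-\varepsilon,t)$. An identical argument with $t$ in place of $x$ delivers $\varphi(x,t-\varepsilon)$ for $G_2$.

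As a sanity check I would also verify the claim by direct substitution: setting $v(x,t)=\varphi(x-\varepsilon,t)$, one computes
\begin{equation*}
v_t+g(v)v_x+\lambda h(v)=\varphi_t(x-\varepsilon,t)+g(\varphi(x-\varepsilon,t))\,\varphi_x(x-\varepsilon,t)+\lambda h(\varphi(x-\varepsilon,t))=0,
\end{equation*}
since $\varphi$ satisfies (\ref{34}) at the point $(x-\varepsilon,t)$; the analogous calculation with a $t$-shift works for $G_2$.

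There is no real obstacle here: the only fact needed is that equation (\ref{34}) is autonomous in both $x$ and $t$, i.e.\ neither $g(u)$ nor $h(u)$ depends explicitly on the independent variables, which is precisely the reason $\partial_x$ and $\partial_t$ appear as infinitesimal symmetries in all seven cases to begin with. Thus the theorem follows immediately from the general solution-to-solution property of Lie point symmetries, specialised to the translation subgroup.
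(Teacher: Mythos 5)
Your proposal is correct and follows essentially the same reasoning the paper relies on implicitly: the exponentiated flows $G_{1}$, $G_{2}$ of the admitted generators $M_{1}=\partial _{x}$, $M_{2}=\partial _{t}$ map solutions to solutions, which for translations amounts to shifting the arguments of $\varphi$ (the paper states the theorem without proof, as an immediate consequence of the listed group transformations). Your added direct-substitution check, using that equation $\left( \ref{34}\right)$ is autonomous, is a harmless confirmation of the same fact.
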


\begin{theorem}
If $u=\varphi \left( x,t\right) $ is an invariant solution of $\left( \ref%
{34}\right) $ obtained through the remaining generator of case one, then so
is the following function:\medskip \newline
$G_{3}.\varphi \left( x,t\right) =e^{-\frac{\varepsilon }{k}}\varphi \left(
e^{\frac{k-m+1}{k}\varepsilon }x,e^{\frac{1-m}{k}\varepsilon }t\right) $.
\end{theorem}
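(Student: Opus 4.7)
The plan is to verify directly that the right-hand side in $G_3.\varphi$ satisfies equation $(\ref{34})$ with $g(u)=u^k$, $h(u)=u^m$. Since $M_3$ is an infinitesimal symmetry (as established in Case 1 of Theorem \ref{damping}) and $G_3$ is its exponential, the general Lie theory guarantees that $G_3$ sends solutions to solutions. The task therefore reduces to converting the action
\[
G_3:(x,t,u)\longmapsto\bigl(e^{\frac{k-m+1}{k}\varepsilon}x,\;e^{\frac{1-m}{k}\varepsilon}t,\;e^{\frac{\varepsilon}{k}}u\bigr)
\]
into the correct induced action on the function $\varphi$. Writing $(\tilde x,\tilde t,\tilde u)$ for the image variables and using $\tilde u=\varphi(\tilde x,\tilde t)$ after inverting the coordinate part (or, equivalently, applying the group element with parameter $-\varepsilon$), one lands on
\[
u=e^{-\frac{\varepsilon}{k}}\varphi\bigl(e^{\frac{k-m+1}{k}\varepsilon}x,\;e^{\frac{1-m}{k}\varepsilon}t\bigr),
\]
which is precisely the formula in the statement.

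My first concrete step would be to set $\alpha=e^{\frac{k-m+1}{k}\varepsilon}$, $\beta=e^{\frac{1-m}{k}\varepsilon}$, $\gamma=e^{-\frac{\varepsilon}{k}}$, and define $\tilde u(x,t)=\gamma\,\varphi(\alpha x,\beta t)$. Then by the chain rule
\[
\tilde u_t=\gamma\beta\,\varphi_t(\alpha x,\beta t),\qquad \tilde u_x=\gamma\alpha\,\varphi_x(\alpha x,\beta t),
\]
and $\tilde u^{k}=\gamma^{k}\varphi^{k}$, $\tilde u^{m}=\gamma^{m}\varphi^{m}$ at the argument $(\alpha x,\beta t)$.

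Substituting into $\tilde u_t+\tilde u^{k}\tilde u_x+\lambda\tilde u^{m}$ gives
\[
\gamma\beta\,\varphi_t+\gamma^{k+1}\alpha\,\varphi^{k}\varphi_x+\lambda\gamma^{m}\varphi^{m}.
\]
The key algebraic check is that the three scaling factors coincide: a quick exponent count yields $\gamma\beta=\gamma^{k+1}\alpha=\gamma^{m}=e^{-\frac{m\varepsilon}{k}}$, so the expression collapses to $\gamma^{m}\bigl[\varphi_t+\varphi^{k}\varphi_x+\lambda\varphi^{m}\bigr]$, which vanishes because $\varphi$ solves $(\ref{34})$. This closes the argument.

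There is no serious obstacle here; the content of the proof is the exponent bookkeeping showing that every term scales uniformly by $\gamma^{m}$. That uniformity is not an accident but a reflection of the fact that $M_3$ was obtained precisely as the scaling symmetry compatible with all three monomials $u_t$, $u^{k}u_x$, $\lambda u^{m}$ in Case 1, so the verification and the earlier symmetry analysis are really two sides of the same computation.
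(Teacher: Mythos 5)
Your proposal is correct and follows essentially the same route as the paper: the paper obtains $G_{3}$ by exponentiating $M_{3}$ and then states the transformed-solution formula as the standard consequence that a point symmetry maps solutions to solutions (after inverting the coordinate part), which is exactly your derivation of $u=e^{-\frac{\varepsilon }{k}}\varphi \left( e^{\frac{k-m+1}{k}\varepsilon }x,e^{\frac{1-m}{k}\varepsilon }t\right)$. Your explicit chain-rule check that $\gamma \beta =\gamma ^{k+1}\alpha =\gamma ^{m}=e^{-\frac{m\varepsilon }{k}}$, so every term scales uniformly, is a correct verification that the paper leaves implicit, and it confirms the stated formula directly.
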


\begin{theorem}
If $u=\varphi \left( x,t\right) $ is an invariant solution of $\left( \ref%
{34}\right) $ obtained through the generators of case two, then so are the
following functions:\medskip \newline
$G_{3}.\varphi \left( x,t\right) =\sqrt[m-1]{\frac{\left[ \varphi \left(
x-\ln \sqrt[a]{1-a\varepsilon t},\frac{t}{1-a\varepsilon t}\right) \right]
^{m-1}-\varepsilon \left( 1-a\varepsilon t\right) }{\left( 1-a\varepsilon
t\right) ^{2}}}$,\medskip \newline
$G_{4}.\varphi \left( x,t\right) =\sqrt[m-1]{\left( 1+a\varepsilon
te^{-ax}\right) \left[ \varphi \left( \ln \sqrt[a]{a\varepsilon t+e^{ax}}%
,t\right) \right] ^{m-1}-\varepsilon e^{-ax}}$,\medskip \newline
$G_{5}.\varphi \left( x,t\right) =\sqrt[1-m]{\left[ \varphi \left( x,t-\frac{%
\varepsilon e^{ax}}{a}\right) \right] ^{1-m}-\varepsilon e^{ax}}$,\medskip 
\newline
$G_{6}.\varphi \left( x,t\right) =e^{-\frac{\varepsilon }{m-1}}\varphi
\left( x,e^{-\varepsilon }t\right) $,\medskip \newline
$G_{7}.\varphi \left( x,t\right) =e^{-\lambda x}\sqrt[m-1]{-\varepsilon
+e^{ax}}\varphi \left( \ln \sqrt[a]{-\varepsilon +e^{ax}},t\right) $%
,\medskip \newline
$G_{8}.\varphi \left( x,t\right) =\sqrt[1-m]{\left[ \varphi \left( -\ln \sqrt%
[a]{-a\varepsilon +e^{-ax}},\frac{t}{1-a\varepsilon e^{ax}}\right) \right]
^{1-m}-\frac{a^{2}\varepsilon t}{a\varepsilon -e^{-ax}}}$.
\end{theorem}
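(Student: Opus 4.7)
The plan is to invoke the fundamental principle of Lie symmetry: if $G_\varepsilon$ is a one-parameter Lie point symmetry group of equation (\ref{34}), then whenever $u=\varphi(x,t)$ is a solution, the function obtained by applying $G_\varepsilon$ to the graph of $\varphi$ is again a solution. Since Theorem \ref{damping}, Case 2, established that $M_3,\ldots,M_8$ are admitted symmetry generators, the groups $G_3,\ldots,G_8$ displayed at the beginning of Section 3 do act on the solution set of (\ref{34}). The six transformed-solution formulas in the statement are then derived by unpacking what "$G_i$ applied to $\varphi$" looks like as an explicit function of $(x,t)$ for each $i=3,\ldots,8$.

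For a given $i$, I would read off the transformation $G_i:(x,t,u)\mapsto(\widetilde{x},\widetilde{t},\widetilde{u})$ with $\widetilde{x}=X_i(x,t,u,\varepsilon)$, $\widetilde{t}=T_i(x,t,u,\varepsilon)$, $\widetilde{u}=U_i(x,t,u,\varepsilon)$ and substitute $u=\varphi(x,t)$ everywhere, so that $(\widetilde{x},\widetilde{t},\widetilde{u})$ become functions of $(x,t)$. I would then invert the first two equations to write $(x,t)$ in terms of $(\widetilde{x},\widetilde{t})$ and plug back into the third to obtain $\widetilde{u}$ as a function of $(\widetilde{x},\widetilde{t})$; finally I rename the pair $(\widetilde{x},\widetilde{t})$ back to $(x,t)$. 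A crucial simplification is that for each of $G_4, G_5, G_6, G_7$ exactly one of the two coordinates is left fixed, so the inversion of the $(\widetilde{x},\widetilde{t})$-relations is immediate, while the $u$-component is always purely algebraic (a shifted power) and thus trivially invertible for $\widetilde{u}$. This yields the radicals in the theorem and explains the appearance of $[\varphi(\cdot,\cdot)]^{m-1}$ or $[\varphi(\cdot,\cdot)]^{1-m}$ according to which branch of the algebraic relation $\widetilde{u}^{m-1}$ vs.\ $u^{m-1}$ one solves for.

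The main obstacle lies in the generators $M_3$ and $M_8$, whose exponentiated forms couple $x$ and $t$ nontrivially through expressions like $\widetilde{x}=x-\ln\sqrt[a]{1-a\varepsilon t}$ with $\widetilde{t}=t/(1-a\varepsilon t)$, or $\widetilde{x}=-\ln\sqrt[a]{-a\varepsilon+e^{-ax}}$ with $\widetilde{t}=t/(1-a\varepsilon e^{ax})$. Here the inversion is not merely substitution: one has to compute $t$ from the Möbius-type relation in $\widetilde{t}$ first, then feed it back into the logarithmic equation for $\widetilde{x}$ to get $x$, and only afterwards evaluate the $\widetilde{u}$-component, which itself depends on $t$ explicitly. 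The careful bookkeeping of the sign of $\varepsilon$ (since passing from $G_\varepsilon$ acting on points to $G_\varepsilon$ acting on functions involves the inverse map) is what produces the subtractions by $\varepsilon(1-a\varepsilon t)$ and $a^2\varepsilon t/(a\varepsilon-e^{-ax})$ in the $G_3$ and $G_8$ formulas respectively.

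Once these six direct calculations are performed, no further verification is needed: each $\tilde\varphi$ is automatically a solution because $G_i$ is a symmetry of (\ref{34}). The theorem therefore follows directly from Theorem \ref{damping} (which guarantees symmetry) together with the explicit exponentiated flows listed above it (which provide the concrete action on coordinates).
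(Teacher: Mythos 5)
Your proposal is correct and is essentially the argument the paper relies on: Theorem \ref{damping} (Case 2) guarantees that the exponentiated groups $G_{3},\dots ,G_{8}$ are symmetries, hence map solution graphs to solution graphs, and the stated formulas are just the explicit form of that action with $u=\varphi (x,t)$ substituted. One remark: the paper's formulas arise most directly by setting $\widetilde{u}=\varphi \left( \widetilde{x},\widetilde{t}\right) $ and solving the algebraic $u$-component for $u$ (equivalently, applying the inverse group element), which makes the inversion of the $(x,t)$-map for $G_{3}$ and $G_{8}$ --- what you call the ``main obstacle'' --- unnecessary, while your push-forward route also works once the $\varepsilon \mapsto -\varepsilon $ relabeling you already flag is taken into account.
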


\begin{theorem}
If $u=\varphi \left( x,t\right) $ is an invariant solution of $\left( \ref%
{34}\right) $ obtained through the generators of case three, then so are the
following functions:\medskip \newline
$G_{6}.\varphi \left( x,t\right) =\sqrt[1-m]{\left[ \varphi \left(
\varepsilon t+x,t\right) \right] ^{1-m}-\varepsilon }$,\medskip \newline
$G_{7}.\varphi \left( x,t\right) =\sqrt[1-m]{\left\{ \left[ \varphi \left(
\left( x+\frac{a}{2}t^{2}\right) e^{\varepsilon }-\frac{a}{2}t^{2},t\right) %
\right] ^{1-m}+2at\right\} e^{-\varepsilon }-2at}$,\medskip \newline
$G_{8}.\varphi \left( x,t\right) =\sqrt[1-m]{\left[ \varphi \left( \left( x+%
\frac{a}{2}t^{2}\left( 1-e^{\varepsilon }\right) \right) e^{\varepsilon
},te^{\varepsilon }\right) \right] ^{1-m}-at\left( 1-e^{\varepsilon }\right) 
}$.
\end{theorem}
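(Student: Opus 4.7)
The plan is to apply the standard Lie-group principle that a one-parameter point symmetry group maps solutions of the PDE to solutions. Since Case~3 of Theorem~\ref{damping} establishes that $M_6$, $M_7$, $M_8$ are symmetry generators of (\ref{34}), and the associated exponentiated transformations $G_6$, $G_7$, $G_8$ were computed explicitly in the preceding section, the theorem reduces to rewriting the abstract statement ``$G_i$ carries the graph of $\varphi$ to the graph of another solution'' as an explicit closed-form expression for that new solution.

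To execute this, I would fix the following principle: if $u_0 = \varphi(x_0, t_0)$ and $(x_1, t_1, u_1) := G_i(x_0, t_0, u_0)$, then the function $\psi_i$ defined by $\psi_i(x_1, t_1) = u_1$ is again a solution of (\ref{34}). Extracting $\psi_i$ explicitly amounts to inverting the base map $(x_0, t_0) \mapsto (x_1, t_1)$, which is a local diffeomorphism for each fixed $\varepsilon$, and then substituting $u_0 = \varphi(x_0, t_0)$ into the $u$-component of $G_i$.

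For $G_6$ the inversion is immediate: $t_0 = t_1$, $x_0 = x_1 - \varepsilon t_1$, and the $u$-component yields $u_1^{\,1-m} = \varepsilon + [\varphi(x_1 - \varepsilon t_1, t_1)]^{1-m}$. For $G_7$ the $x$-equation regroups as $x_0 + \tfrac{a}{2}t_0^2 = (x_1 + \tfrac{a}{2}t_1^2)e^{-\varepsilon}$, giving $x_0$ in closed form, after which the $u$-component unwinds by the same grouping trick. For $G_8$ both base coordinates transform, so one first solves $t_0 = t_1 e^{-\varepsilon}$ and then $x_0 = x_1 e^{-\varepsilon} - \tfrac{a}{2}t_1^2 e^{-2\varepsilon}(1 - e^\varepsilon)$ before substituting into the nonlinear $u$-relation. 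In every case the raw inversion already produces a valid solution-generating formula, and the formula stated in the theorem is precisely this one under the reparametrization $\varepsilon \mapsto -\varepsilon$, which parametrizes the same one-parameter group in the opposite direction.

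The main obstacle is purely algebraic bookkeeping, and it is most pronounced in $G_8$: one must verify that after the $\varepsilon \mapsto -\varepsilon$ swap the composite $x$-shift collapses via the identity $xe^{\varepsilon} - \tfrac{a}{2}t^{2}e^{2\varepsilon}(1 - e^{-\varepsilon}) = e^{\varepsilon}\bigl(x + \tfrac{a}{2}t^{2}(1 - e^{\varepsilon})\bigr)$, and that the additive $u$-correction $a(1 - e^{-\varepsilon})\,t\,e^{\varepsilon}$ simplifies to $-at(1 - e^{\varepsilon})$. Once these sign and exponent manipulations are kept straight, the three displayed formulas for $G_6.\varphi$, $G_7.\varphi$, $G_8.\varphi$ emerge directly from the closed-form group actions recorded in the preceding section.
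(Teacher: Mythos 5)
Your proposal is correct and follows essentially the same route as the paper, which states these formulas as direct consequences of the explicitly computed one-parameter groups $G_{6}$, $G_{7}$, $G_{8}$ of Case 3: a symmetry group maps solutions to solutions, and the closed forms are obtained by inverting the base transformation and substituting, with the displayed expressions corresponding to the group element with parameter $-\varepsilon$ (equivalently $g_{\varepsilon }^{-1}$), exactly as you observed. Your algebraic checks for the $G_{8}$ case (the collapse of the composite $x$-shift and the simplification of the additive $u$-correction to $-at\left( 1-e^{\varepsilon }\right) $) are accurate.
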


\begin{theorem}
If $u=\varphi \left( x,t\right) $ is an invariant solution $\left( \ref{34}%
\right) $ obtained through the generators of case four, then so are the
following functions:\medskip \newline
$G_{6}.\varphi \left( x,t\right) =\left[ \left[ \varphi \left(
xe^{\varepsilon },\left( t+\frac{a}{2}x^{2}\left( e^{\varepsilon }-1\right)
\right) e^{\varepsilon }\right) \right] ^{\frac{1-m}{2}}-ax\left(
e^{\varepsilon }-1\right) \right] ^{\frac{2}{1-m}}$,\medskip \newline
$G_{7}.\varphi \left( x,t\right) =\left[ e^{\varepsilon }\left[ \varphi
\left( xe^{\varepsilon },t+\frac{a}{2}x^{2}\left( e^{2\varepsilon }-1\right)
\right) \right] ^{\frac{1-m}{2}}-ax\left( e^{2\varepsilon }-1\right) \right]
^{\frac{2}{1-m}}$,\medskip \newline
$G_{8}.\varphi \left( x,t\right) =\left[ \left[ \varphi \left(
xe^{-\varepsilon },t\right) \right] ^{\frac{1-m}{2}}+\varepsilon \right] ^{%
\frac{2}{1-m}}$.
\end{theorem}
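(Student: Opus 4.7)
The plan is to derive each of the three transformed-solution formulas by directly pushing forward the graph of an arbitrary solution $u=\varphi(x,t)$ along the one-parameter point transformation groups $G_{6}$, $G_{7}$, $G_{8}$ computed at the start of Section~3 for Case~4. The underlying principle is standard: since $M_{6}$, $M_{7}$, $M_{8}$ are infinitesimal symmetries of equation $(\ref{34})$, the exponentiated maps $G_{\varepsilon}$ preserve the solution set of $(\ref{34})$, so the image of the solution surface $\Sigma=\{(x,t,\varphi(x,t))\}$ under $G_{\varepsilon}$ is the graph of another solution, which is precisely the function to be identified.

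Operationally, for each $i\in\{6,7,8\}$ I would read off from Section~3 the explicit formula $(\bar{x},\bar{t},\bar{u})=G_{\varepsilon}^{i}(x,t,u)$ and exploit the favourable feature, peculiar to Case~4, that $\bar{x}$ and $\bar{t}$ depend only on $(x,t,\varepsilon)$ and not on $u$. Consequently the base change $(x,t)\mapsto(\bar{x},\bar{t})$ can be inverted in the $(x,t)$ plane alone, without any implicit use of $u=\varphi(x,t)$. For $G_{8}$ the inversion is immediate: $\bar{x}=xe^{-\varepsilon}$, $\bar{t}=t$ yield $x=\bar{x}e^{\varepsilon}$, $t=\bar{t}$, and substituting $u=\varphi(x,t)$ into $\bar{u}=(-\varepsilon+u^{(1-m)/2})^{2/(1-m)}$ gives $\tilde\varphi(\bar{x},\bar{t})$ outright. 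For $G_{6}$ and $G_{7}$, the relation $\bar{x}=xe^{\varepsilon}$ immediately fixes $x=\bar{x}e^{-\varepsilon}$; plugging this into $\bar{t}=[t+\tfrac{a}{2}(e^{\varepsilon}-1)x^{2}]e^{\varepsilon}$ (respectively $\bar{t}=t+\tfrac{a}{2}(e^{2\varepsilon}-1)x^{2}$) solves $t$ explicitly in terms of $(\bar{x},\bar{t},\varepsilon)$. Substituting the expressions for $(x,t)$ into $\bar{u}=[a(e^{\varepsilon}-1)x+u^{(1-m)/2}]^{2/(1-m)}$ (resp.\ the $G_{7}$ analogue) and inserting $u=\varphi(x,t)$ then produces $\tilde\varphi(\bar{x},\bar{t})$. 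A final relabelling $(\bar{x},\bar{t})\to(x,t)$, together with the sign convention $\varepsilon\mapsto -\varepsilon$ already used throughout the Case~2 and Case~3 theorems, reproduces the stated formulas.

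The only real obstacle is algebraic, not conceptual: one has to carefully track the fractional exponents $(1-m)/2$ and $2/(1-m)$ together with the additive terms $a(e^{\varepsilon}-1)x$, $a(e^{2\varepsilon}-1)x$, $\varepsilon$, and the external factor $e^{\varepsilon}$ in $G_{7}$, and verify that after back-substitution all signs and $e^{\pm\varepsilon}$ corrections line up with the displayed expressions. Because $\bar{x}$ and $\bar{t}$ do not involve $u$ for any of the three generators, no implicit-function inversion is required, and the derivation is a routine extension of the Case~2 and Case~3 arguments.
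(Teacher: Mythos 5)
Your proposal is correct and is essentially the paper's own (implicit) argument: the paper states these theorems as direct consequences of the exponentiated one-parameter groups $G_{6},G_{7},G_{8}$ listed for Case 4, exactly the graph push-forward you describe, and since $\bar{x},\bar{t}$ are $u$-independent the base inversion plus the relabelling $\varepsilon\mapsto-\varepsilon$ reproduces the displayed formulas verbatim.
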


\begin{theorem}
If $u=\varphi \left( x,t\right) $ is an invariant solution of $\left( \ref%
{34}\right) $ obtained through the respective generator of case five and
six, then so are the following respective function:\medskip \newline
$G_{3}.\varphi \left( x,t\right) =e^{-\frac{\varepsilon }{1-m}}\varphi
\left( e^{\frac{3}{2}\varepsilon }x,e^{\varepsilon }t\right) $.$\medskip 
\newline
G_{3}.\varphi \left( x,t\right) =e^{-\frac{3}{m-1}\varepsilon }\varphi
\left( e^{-2\varepsilon }x,e^{-3\varepsilon }t\right) $.
\end{theorem}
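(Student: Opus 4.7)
The plan is to exploit the fact that in both Cases 5 and 6 the nontrivial one-parameter group $G_{3}^{\varepsilon}$ acts as a pure scaling on $(x,t,u)$, so that invariance of the PDE under $G_{3}$ translates into a balance of exponents that can be checked by direct substitution, and the rest of the argument is bookkeeping.

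Concretely, for Case 5 set $\psi(x,t):=e^{-\varepsilon /(1-m)}\varphi (e^{3\varepsilon /2}x,e^{\varepsilon }t)$ and evaluate $\psi _{t}$, $\psi _{x}$, $\psi ^{k}$, $\psi ^{m}$ with $k=(1-m)/2$ by the chain rule. Each term of $\psi _{t}+\psi ^{k}\psi _{x}+\lambda \psi ^{m}$ is a scalar multiple of the corresponding term of $\varphi _{t}+\varphi ^{k}\varphi _{x}+\lambda \varphi ^{m}$ evaluated at $(e^{3\varepsilon /2}x,e^{\varepsilon }t)$; the overall exponential factors are, respectively,
\begin{equation*}
e^{-\frac{\varepsilon }{1-m}+\varepsilon },\qquad e^{-\frac{(k+1)\varepsilon
}{1-m}+\frac{3\varepsilon }{2}},\qquad e^{-\frac{m\varepsilon }{1-m}}.
\end{equation*}
Using $k=(1-m)/2$ a short calculation shows that all three exponents coincide, so the three terms can be grouped and the equation for $\psi $ reduces to the equation for $\varphi $ at the rescaled point, which vanishes by hypothesis. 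Repeat the same verification for Case 6 with $\psi (x,t):=e^{-3\varepsilon /(m-1)}\varphi (e^{-2\varepsilon }x,e^{-3\varepsilon }t)$ and $k=(m-1)/3$, where again the three exponential weights coincide.

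A conceptually cleaner alternative, which I would state as justification, is simply to apply the standard principle: since $G_{3}^{\varepsilon }$ is the exponentiation of the symmetry generator $M_{3}$ already verified in Theorem \ref{damping}, it maps solutions to solutions. Given $u=\varphi (x,t)$, the transformed solution $\tilde{u}=\tilde{\varphi}(\tilde{x},\tilde{t})$ is obtained by inverting the scaling relations $\tilde{x}=e^{a\varepsilon }x$, $\tilde{t}=e^{b\varepsilon }t$, $\tilde{u}=e^{c\varepsilon }u$ using the exponents read off from $M_{3}$ in each case, and relabelling $\tilde{x}\mapsto x$, $\tilde{t}\mapsto t$, $\varepsilon \mapsto -\varepsilon $ (permissible because $\{G_{3}^{\varepsilon }\}_{\varepsilon \in \mathbb{R}}$ is a one-parameter group) yields the formulas in the statement.

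The only real obstacle is avoiding arithmetic slips in the exponents; there is no conceptual difficulty once the defining restrictions $k=(1-m)/2$ (Case 5) and $k=(m-1)/3$ (Case 6) are used in the right place to force the three exponential factors to match.
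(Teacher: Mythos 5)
Your proposal is correct. The paper gives no separate proof of this theorem: it is stated as an immediate consequence of exponentiating $M_{3}$ in Cases 5 and 6 together with the standard fact that a symmetry group maps solutions to solutions, which is exactly your ``conceptually cleaner alternative'' (including the relabelling $\varepsilon \mapsto -\varepsilon$ to invert the scaling). Your primary argument, the direct chain-rule substitution, is a genuinely self-contained verification that the paper does not carry out, and your exponent bookkeeping is right: with $k=\frac{1-m}{2}$ all three terms pick up the common factor $e^{-\frac{m\varepsilon}{1-m}}$, and with $k=\frac{m-1}{3}$ the common factor is $e^{-\frac{3m\varepsilon}{m-1}}$, so the equation for $\psi$ reduces to the equation for $\varphi$ at the rescaled point. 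This check buys something concrete here: it confirms that the correct $u$-factor in Case 5 is $e^{-\frac{\varepsilon}{1-m}}$, consistent with the coefficient $\frac{1}{1-m}u\partial_{u}$ of $M_{3}$ and with the theorem as stated, whereas the paper's listed Case 5 transformation $G_{3}$ shows $e^{\frac{3}{1-m}\varepsilon}u$ in the third slot, which appears to be a typographical slip that a purely formula-quoting proof would have inherited.
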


\begin{theorem}
\label{th7}If $u=\varphi \left( x,t\right) $ is an invariant solution of $%
\left( \ref{34}\right) $ obtained through the generators of case seven, then
so are the following functions:\medskip \newline
$G_{3}.\varphi \left( x,t\right) =\sqrt[k]{\frac{\left( 1-\lambda
\varepsilon kx\right) }{1+\lambda \varepsilon x\left[ \varphi \left( \frac{x%
}{1-\lambda \varepsilon kx},t-\ln \sqrt[\lambda k]{1-\lambda \varepsilon kx}%
\right) \right] ^{k}}}\left[ \varphi \left( \frac{x}{1-\lambda \varepsilon kx%
},t-\ln \sqrt[\lambda k]{1-\lambda \varepsilon kx}\right) \right] $,$%
\medskip \newline
G_{4}.\varphi \left( x,t\right) =\frac{\varphi \left( x,-\ln \sqrt[\lambda k]%
{\lambda \varepsilon kx+e^{-\lambda kt}}\right) }{\sqrt[k]{1+\varepsilon
e^{\lambda kt}\left( \lambda kx+\left[ \varphi \left( x,-\ln \sqrt[\lambda k]%
{\lambda \varepsilon kx+e^{-\lambda kt}}\right) \right] ^{k}\right) }}$,$%
\medskip \newline
G_{5}.\varphi \left( x,t\right) =\sqrt[k]{\left[ \varphi \left( x-\frac{%
\varepsilon }{\lambda k}e^{-\lambda kt},t\right) \right] ^{k}-\varepsilon
e^{-\lambda kt}}$,$\medskip \newline
G_{6}.\varphi \left( x,t\right) =e^{-\frac{\varepsilon }{k}}\varphi \left(
e^{\varepsilon }x,t\right) $,$\medskip \newline
G_{7}.\varphi \left( x,t\right) =\frac{1}{\sqrt[k]{1-\varepsilon e^{\lambda
kt}}}\varphi \left( x,-\ln \sqrt[\lambda k]{-\varepsilon +e^{-\lambda kt}}%
\right) $,$\medskip \newline
G_{8}.\varphi \left( x,t\right) =\sqrt[k]{\left[ \varphi \left( \frac{x}{%
1+\lambda \varepsilon ke^{-\lambda kt}},\ln \sqrt[\lambda k]{\lambda
\varepsilon k+e^{\lambda kt}}\right) \right] ^{k}-\frac{\lambda
^{2}k^{2}\varepsilon xe^{-\lambda kt}}{1+\lambda \varepsilon ke^{-\lambda kt}%
}}$.
\end{theorem}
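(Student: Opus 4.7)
The plan is to invoke the fundamental principle of Lie group analysis: each infinitesimal generator $M_i$ from Case 7 of Theorem \ref{damping} exponentiates to a one-parameter local point transformation $G_i:(x,t,u)\mapsto(\tilde x,\tilde t,\tilde u)$ that leaves equation $\left(\ref{34}\right)$ invariant, and hence carries the graph of any solution to the graph of another solution. The explicit formulas for each $G_i$ were already derived in Section 3 by integrating the characteristic ODEs, so the substantive work remaining is only the algebraic inversion needed to express the transformed solution as a function of the new independent variables.

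The first step is to state the general recipe. Given $u=\varphi(x,t)$ and a symmetry $G_i$, the new solution $\psi=G_i.\varphi$ is obtained by the implicit prescription $\psi(\tilde x,\tilde t)=\tilde u\bigl(x,t,\varphi(x,t),\varepsilon\bigr)$, with $(x,t)$ regarded as a function of $(\tilde x,\tilde t)$ through the first two components of $G_i$. Equivalently, one applies $G_i$ with parameter $-\varepsilon$ to pull back the arguments; this sign convention is what produces the minus signs appearing in the statement. I would carry this identification out once in general and then invoke it six times.

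Next, for each of $M_3,\dots,M_8$ in Case 7, I would perform the coordinate inversion. For $G_5,G_6,G_7$ the first two components of the transformation do not involve $u$, so solving for $(x,t)$ in terms of $(\tilde x,\tilde t)$ is a direct algebraic manipulation on the explicit formulas in Section 3, and substituting $u=\varphi$ into the $\tilde u$-component yields the displayed expressions immediately. For $G_3,G_4,G_8$ the $u$-component couples $x,t,u$ through a radical of the form $\sqrt[k]{\text{rational in }u^k}$, so after substituting $u=\varphi$ one must raise to the $k$-th power, clear denominators, and solve a linear equation in $\tilde u^k$; the resulting closed form then matches the statement after replacing $\varepsilon$ by $-\varepsilon$ in the inversion.

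The main obstacle I expect is the bookkeeping in $G_3,G_4,G_8$: these three transformations mix $x$ with $u^k$ inside denominators and exponentials, and one must be careful that the inversion $x=x(\tilde x,\tilde t)$, $t=t(\tilde x,\tilde t)$ is consistent with the formulas of Section 3 before substituting $\varphi$. In particular, verifying that the expression $\sqrt[k]{(1-\lambda\varepsilon kx)/(1+\lambda\varepsilon x\,\varphi^k)}\cdot\varphi$ in $G_3.\varphi$ is the correct inverse of the forward map requires clearing the nested radical carefully; once that template is set, $G_4$ and $G_8$ follow by the same algebraic pattern. No additional invariance check is needed beyond the group property already established in the commutator table of Case 7, so the argument reduces to these six explicit inversions.
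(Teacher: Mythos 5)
Your proposal is correct and takes essentially the same route as the paper: Theorem \ref{th7} is obtained there directly from the one-parameter groups $G_{3},\dots ,G_{8}$ of Case 7 already exponentiated in Section 3, by letting the (inverse, i.e.\ $-\varepsilon$) transformation act on the graph of $\varphi$ and solving for the transformed $u$ --- which for $G_{3}$, $G_{4}$, $G_{8}$ is precisely your linear equation in $\tilde{u}^{k}$ after raising to the $k$-th power. One small correction that does not affect the argument: the fact that each $G_{i}$ maps solutions to solutions comes from the $M_{i}$ satisfying the determining (symmetry) equations of $\left( \ref{34}\right) $, not from the closure displayed in the commutator table.
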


\begin{example}
This example highlights the importance of the above theorems. \ We start
with the trivial solution of equation $\left( \ref{34}\right) $, that is $%
u\left( x,t\right) =0$. \ By applying the transformations of theorem \ref%
{th7} on this solution we found that $G_{3}$, $G_{4}$, $G_{6}$, and $G_{7}$
all give no new solution but $G_{5}$ and $G_{8}$ respectively give $u=\sqrt[k%
]{-\varepsilon e^{-\lambda kt}}$ and $u=\sqrt[k]{-\frac{\lambda
^{2}k^{2}\varepsilon xe^{-\lambda kt}}{1+\lambda \varepsilon ke^{-\lambda kt}%
}}$ as the new non-trivial one parameter invariant solutions of $\left( \ref%
{34}\right) $. \ We next pick one of these solutions and apply the rest of
the transformations on it so as to generate more new solutions. \ Suppose we
pick $u\left( x,t\right) =\sqrt[k]{-\varepsilon e^{-\lambda kt}}$ which is
invariant under $G_{5}$, the actions of the rest of the $G_{i}$ are as
follows:\newline
$G_{7}:G_{5}\longrightarrow \sqrt[k]{-\varepsilon e^{-\lambda kt}\text{,}}%
\medskip $\newline
$G_{8}:G_{7}\longrightarrow \sqrt[k]{\frac{-\varepsilon -\lambda
^{2}k^{2}\varepsilon _{1}x}{\lambda k\varepsilon _{1}+e^{\lambda kt}}}$,$%
\medskip $\newline
$G_{4}:G_{8}\longrightarrow \sqrt[k]{\frac{\varepsilon +\lambda
^{2}k^{2}\varepsilon _{1}x}{\left( \varepsilon \varepsilon _{2}-1\right)
e^{\lambda kt}-\lambda k\varepsilon _{1}}}$,$\medskip $\newline
$G_{3}:G_{4}\longrightarrow \sqrt[k]{\frac{\left( 1-\lambda k\varepsilon
_{3}x\right) \left( \lambda ^{2}k^{2}\varepsilon _{1}x-\lambda k\varepsilon
\varepsilon _{3}x+\varepsilon \right) }{e^{\lambda kt}+\lambda \left(
1-\lambda k\varepsilon _{3}x\right) \left( \varepsilon \varepsilon
_{3}x-k\varepsilon _{1}\right) +\lambda ^{3}k^{2}\varepsilon _{1}\varepsilon
_{3}x^{2}}}$,$\medskip $\newline
$G_{6}:G_{3}\longrightarrow e^{-\frac{\varepsilon _{4}}{k}}\sqrt[k]{\frac{%
\left( 1-\lambda k\varepsilon _{3}e^{\varepsilon _{4}}x\right) \left(
\lambda ^{2}k^{2}\varepsilon _{1}e^{\varepsilon _{4}}x-\lambda k\varepsilon
\varepsilon _{3}e^{\varepsilon _{4}}x+\varepsilon \right) }{e^{\lambda
kt}+\lambda \left( 1-\lambda k\varepsilon _{3}e^{\varepsilon _{4}}x\right)
\left( \varepsilon \varepsilon _{3}e^{\varepsilon _{4}}x-k\varepsilon
_{1}\right) +\lambda ^{3}k^{2}\varepsilon _{1}\varepsilon
_{3}e^{2\varepsilon _{4}}x^{2}}}$.$\medskip $\newline
Hence we see that by applying the transformations of theorem \ref{th7} to $%
u\left( x,t\right) =0$, we obtained a non-trivial five-parameters solution%
\begin{equation*}
u\left( x,t\right) =e^{-\frac{\varepsilon _{4}}{k}}\sqrt[k]{\frac{\left(
1-\lambda k\varepsilon _{3}e^{\varepsilon _{4}}x\right) \left( \lambda
^{2}k^{2}\varepsilon _{1}e^{\varepsilon _{4}}x-\lambda k\varepsilon
\varepsilon _{3}e^{\varepsilon _{4}}x+\varepsilon \right) }{e^{\lambda
kt}+\lambda \left( 1-\lambda k\varepsilon _{3}e^{\varepsilon _{4}}x\right)
\left( \varepsilon \varepsilon _{3}e^{\varepsilon _{4}}x-k\varepsilon
_{1}\right) +\lambda ^{3}k^{2}\varepsilon _{1}\varepsilon
_{3}e^{2\varepsilon _{4}}x^{2}}\text{,}}
\end{equation*}%
that keeps equation $\left( \ref{34}\right) $ invariant. \ If we have chosen 
$u=\sqrt[k]{-\frac{\lambda ^{2}k^{2}\varepsilon xe^{-\lambda kt}}{1+\lambda
\varepsilon ke^{-\lambda kt}}}$ which is invariant under $G_{8}$, then the
actions of the rest of the $G_{i}$ will be as follows:\newline
$G_{7}:G_{8}\longrightarrow \sqrt[k]{\frac{\lambda ^{2}k^{2}\varepsilon x}{%
\left( 1-\lambda k\varepsilon \varepsilon _{1}\right) e^{\lambda kt}+\lambda
k\varepsilon }}$,\medskip \newline
$G_{5}:G_{7}\longrightarrow \sqrt[k]{\frac{\varepsilon _{2}\left( \lambda
k\varepsilon \varepsilon _{1}-1\right) -\lambda ^{2}k^{2}\varepsilon x}{%
\lambda k\varepsilon +\left( 1-\lambda k\varepsilon \varepsilon _{1}\right)
e^{\lambda kt}}}$,\medskip \newline
$G_{4}:G_{5}\longrightarrow \sqrt[k]{\frac{\varepsilon _{2}\left( \lambda
k\varepsilon \varepsilon _{1}-1\right) -\lambda ^{2}k^{2}\varepsilon x}{%
\lambda k\varepsilon +\left( 1-\lambda k\varepsilon \varepsilon _{1}\right)
\left( 1-\varepsilon _{2}\varepsilon _{3}\right) e^{\lambda kt}}}$,\medskip 
\newline
$G_{3}:G_{4}\longrightarrow \sqrt[k]{\frac{\left( 1-\lambda k\varepsilon
_{4}\right) \left[ \varepsilon _{2}\left( \lambda k\varepsilon \varepsilon
_{1}-1\right) \left( 1-\lambda k\varepsilon _{4}\right) -\lambda
^{2}k^{2}\varepsilon x\right] }{\lambda \left( 1-\lambda k\varepsilon
_{4}\right) \left[ k\varepsilon -\varepsilon _{2}\varepsilon _{4}\left(
1-\lambda k\varepsilon \varepsilon _{1}\right) x\right] -\lambda
^{3}k^{2}\varepsilon \varepsilon _{4}x^{2}+\left( 1-\lambda k\varepsilon
\varepsilon _{1}\right) \left( 1-\varepsilon _{2}\varepsilon _{3}\right)
e^{\lambda kt}}}$,\medskip \newline
$G_{6}:G_{3}\longrightarrow e^{-\frac{\varepsilon _{5}}{k}}\sqrt[k]{\frac{%
\left( 1-\lambda k\varepsilon _{4}\right) \left[ \varepsilon _{2}\left(
\lambda k\varepsilon \varepsilon _{1}-1\right) \left( 1-\lambda k\varepsilon
_{4}\right) -\lambda ^{2}k^{2}\varepsilon e^{\varepsilon _{5}}x\right] }{%
\lambda \left( 1-\lambda k\varepsilon _{4}\right) \left[ k\varepsilon
-\varepsilon _{2}\varepsilon _{4}\left( 1-\lambda k\varepsilon \varepsilon
_{1}\right) e^{\varepsilon _{5}}x\right] -\lambda ^{3}k^{2}\varepsilon
\varepsilon _{4}e^{2\varepsilon _{5}}x^{2}+\left( 1-\lambda k\varepsilon
\varepsilon _{1}\right) \left( 1-\varepsilon _{2}\varepsilon _{3}\right)
e^{\lambda kt}}\text{.}}$\medskip \newline
Therefore,%
\begin{equation*}
u\left( x,t\right) =\sqrt[k]{\frac{e^{-\varepsilon _{5}}\left( 1-\lambda
k\varepsilon _{4}\right) \left[ \varepsilon _{2}\left( \lambda k\varepsilon
\varepsilon _{1}-1\right) \left( 1-\lambda k\varepsilon _{4}\right) -\lambda
^{2}k^{2}\varepsilon e^{\varepsilon _{5}}x\right] }{\lambda \left( 1-\lambda
k\varepsilon _{4}\right) \left[ k\varepsilon -\varepsilon _{2}\varepsilon
_{4}\left( 1-\lambda k\varepsilon \varepsilon _{1}\right) e^{\varepsilon
_{5}}x\right] -\lambda ^{3}k^{2}\varepsilon \varepsilon _{4}e^{2\varepsilon
_{5}}x^{2}+\left( 1-\lambda k\varepsilon \varepsilon _{1}\right) \left(
1-\varepsilon _{2}\varepsilon _{3}\right) e^{\lambda kt}}}\text{,}
\end{equation*}%
is the new six-parameters invariant solution of equation $\left( \ref{34}%
\right) $.
\end{example}

\begin{itemize}
\item Transformations $G_{1}$ and $G_{2}$ were not used because they only
generate space and time translations respectively.

\item The order of applying the transformations is immaterial.

\item $G_{i}:G_{j}$ means the action of $G_{i}$ on the solution of $G_{j}$.
\end{itemize}

\begin{theorem}
If $g\left( u\right) =h\left( u\right) =u^{k}\left( k\neq 0\right) $\textbf{%
, }then all the above results hold as follows:\newline
1. \ For $k\neq \pm 1$, $k\neq \pm \frac{1}{2}$, and $k\neq $ $\frac{1}{3}$,
equation $\left( \ref{34}\right) $ admits three parameters group of
projective transformations spanned by the vector fields of case 1.$\newline
$2. \ For $k=1$, equation $\left( \ref{34}\right) $ admits eight parameters
group of projective transformations spanned by the vector fields of case 7.%
\newline
3. \ For $k=\frac{1}{2}$, equation $\left( \ref{34}\right) $ admits eight
parameters group of projective transformations spanned by the vector fields
of case 3.\newline
4. \ For $k=-1$, equation $\left( \ref{34}\right) $ admits eight parameters
group of projective transformations spanned by the vector fields of case 4.%
\newline
5. \ For $k=\frac{1}{3}$, equation $\left( \ref{34}\right) $ admits three
parameters group of projective transformations spanned by the vector fields
of case 5.\newline
6. \ For $k=-\frac{1}{2}$, equation $\left( \ref{34}\right) $ admits three
parameters group of projective transformations spanned by the vector field
of case 6.
\end{theorem}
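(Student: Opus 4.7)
The strategy is to re-examine the derivation of Theorem~\ref{damping} under the additional identification $m=k$, which corresponds to $g=h$. Substituting $m=k$ into equation~$(\ref{42})$, the monomials in $u$ have exponents drawn from the multiset $\{2k+1,\,2k,\,k+1,\,k,\,1,\,1-k,\,0\}$, and the entire case split is controlled by which of these exponents collide.

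I would begin with item~1, the generic situation. A direct enumeration of pairwise equalities among the seven exponents shows that collisions occur precisely when $k\in\{-1,-\tfrac{1}{2},\tfrac{1}{3},\tfrac{1}{2},1\}$ (with $k=0$ already excluded by hypothesis). Outside this finite set all seven exponents are distinct, so separating $(\ref{42})$ by powers of $u$ forces each bracketed coefficient to vanish independently. The resulting chain of implications is identical to that used in Case~1 of Theorem~\ref{damping} with $m$ replaced by $k$: one obtains $A\equiv 0$, $C\equiv 0$, and $B$ constant, and substitution into~$(\ref{43})$ yields the three-dimensional algebra spanned by $M_{1}$, $M_{2}$, $M_{3}$ as listed in Case~1.

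For items~2--6, I would match each exceptional value of $k$ with the case of Theorem~\ref{damping} that it triggers. Imposing $m=k$ on the defining relation of each exceptional case gives: $k=m-1$ yields the contradiction $0=-1$, so Case~2 is vacuous and is correctly omitted from the list; $m=1$ yields $k=1$ (item~2, via Case~7); $k=1-m$ yields $k=\tfrac{1}{2}$ (item~3, via Case~3); $k=(m-1)/2$ yields $k=-1$ (item~4, via Case~4); $k=(1-m)/2$ yields $k=\tfrac{1}{3}$ (item~5, via Case~5); and $k=(m-1)/3$ yields $k=-\tfrac{1}{2}$ (item~6, via Case~6). For each admissible distinguished value of $k$, the determining system is already solved in the corresponding proof of Theorem~\ref{damping}, and one simply substitutes $m=k$ into the listed vector fields to obtain the claimed generators of the eight- or three-dimensional symmetry algebra.

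The main point requiring care is to confirm that the reduction to $(\ref{38})$ and $(\ref{42})$, originally carried out under the hypothesis $k\neq m$, remains valid when $m=k$. Inspection of that derivation shows it uses only $g'(u)=ku^{k-1}\not\equiv 0$, which is ensured by $k\neq 0$, and never the distinctness of $k$ and $m$, so the intermediate steps transport without modification. The only remaining work is the mechanical bookkeeping of substituting $m=k$ into the vector fields of each matched case and checking invariance of $u_{t}+u^{k}u_{x}+\lambda u^{k}=0$ under each resulting generator; this reproduces the six-item classification stated in the theorem without any new symmetry computation beyond what is already contained in the proof of Theorem~\ref{damping}.
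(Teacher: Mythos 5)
Your proposal is correct, and it follows exactly the route the paper intends: the paper states this theorem without any proof, treating it as Theorem \ref{damping} specialized to $m=k$, which is precisely what you carry out. Your two key observations — that the reduction to $(\ref{38})$ and $(\ref{42})$ never uses $k\neq m$ (only $k\neq 0$), and that the exponent collisions in $(\ref{42})$ under $m=k$ occur exactly at $k\in\{\pm 1,\pm\tfrac{1}{2},\tfrac{1}{3}\}$ with $k=m-1$ becoming vacuous — are the substance the paper leaves implicit, and your matching of each exceptional $k$ to Cases 3, 4, 5, 6, 7 agrees with the stated classification.
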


\section{Reductions and Exact Invariant Solutions}

The reason behind going through all the tedious process of finding symmetry
generators of any given differential equation is mainly to use them for
obtaining symmetry reductions and possibly, exact solutions of the
underlying differential equation. \ In this section, we shall use the
symmetry generators of each of the classified cases in the previous section
to derive corresponding reduced equations and subsequently, obtain exact
invariant solutions of equation $\left( \ref{34}\right) $ whenever feasible.%
\newline
\textbf{Case 1}. \ The invariant surface condition of the generator $M_{3}=%
\frac{k-m+1}{k}x\partial _{x}+\frac{1-m}{k}t\partial _{t}+\frac{1}{k}%
u\partial _{u}$ is%
\begin{equation*}
\frac{k-m+1}{k}x\frac{\partial u}{\partial x}+\frac{1-m}{k}t\frac{\partial u%
}{\partial t}=\frac{1}{k}u
\end{equation*}%
with corresponding characteristics equations%
\begin{equation*}
\frac{k}{k-m+1}\frac{dx}{x}=\frac{k}{1-m}\frac{dt}{t}=k\frac{du}{u}\text{.}
\end{equation*}%
Solving these equations give two invariants $\psi =x^{\frac{1}{k-m+1}}t^{-%
\frac{1}{1-m}}$ and $\rho =ut^{-\frac{1}{1-m}}$. \ Since $\psi $ is
independent of $u$, then $\rho =F\left( \psi \right) $, $F$ arbitrary. \ So
the invariants become%
\begin{equation*}
\psi =x^{\frac{1}{k-m+1}}t^{-\frac{1}{1-m}},u=t^{\frac{1}{1-m}}F\left( \psi
\right) \text{.}
\end{equation*}%
By chain rule, we have 
\begin{eqnarray*}
u_{t} &=&\frac{1}{1-m}t^{\frac{m}{1-m}}\left( F-\psi F^{\prime }\right) 
\text{,} \\
u_{x} &=&\frac{1}{k-m+1}x^{\frac{m-k}{k-m+1}}F^{\prime }
\end{eqnarray*}%
which changes $\left( \ref{34}\right) $ to the reduced equation%
\begin{equation*}
F-\psi F^{\prime }+\frac{1-m}{k-m+1}\psi ^{m-k}F^{k}F^{\prime }+\lambda
\left( 1-m\right) F^{m}=0\text{.}
\end{equation*}%
This reduced equation is very difficult to solve for arbitrary $k$ and $m$,
but much can be obtained when appropriate choices of $k$ and $m$ are made. \
For example, if $m=k=2$, then the above equation becomes%
\begin{equation*}
F-\psi F^{\prime }-F^{2}F^{\prime }-\lambda F^{2}=0\text{.}
\end{equation*}%
After re-arrangement and simple quadrature, we obtain%
\begin{equation*}
F=\frac{-\left( \lambda \psi +A\right) \pm \sqrt{\left( \lambda \psi
+A\right) ^{2}+4\psi }}{2}
\end{equation*}%
where $A$ is a constant of integration. \ Hence the required invariant
solution of equation $\left( \ref{34}\right) $ with $m=k=2$ is%
\begin{equation*}
u\left( x,t\right) =\frac{-\left( \lambda xt+A\right) \pm \sqrt{\left(
\lambda xt+A\right) ^{2}+4xt}}{2t}\text{.}
\end{equation*}

\textbf{Case 2}. \ The characteristics equations of $M_{3}$ are%
\begin{equation*}
\frac{dx}{t}=\frac{dt}{\lambda \left( m-1\right) t^{2}}=\frac{\left(
m-1\right) du}{u^{2-m}-2\lambda \left( m-1\right) tu}
\end{equation*}%
having invariants $\psi =\frac{e^{\lambda \left( m-1\right) x}}{t}$ and $%
u=\left( \frac{1}{\lambda \left( m-1\right) t}+\frac{F\left( \psi \right) }{%
t^{2}}\right) ^{\frac{1}{m-1}}$. \ Substitution of $u$ and its respective
derivatives in $\left( \ref{34}\right) $ yield the reduced equation $\psi
F^{\prime }+F=0$ having solution $F=\frac{A}{\psi }$. \ Thus the required
invariant solution is%
\begin{equation*}
u=\left( \frac{1}{\lambda \left( m-1\right) t}+\frac{Ae^{-\lambda \left(
m-1\right) x}}{t}\right) ^{\frac{1}{m-1}}\text{.}
\end{equation*}%
The operators $M_{4}$ and $M_{6}$ also produced same result as $M_{3}$ with
the constant of integration $A$ replaced with $\frac{A}{\lambda \left(
1-m\right) }$ and $\frac{e^{A\lambda \left( m-1\right) }}{\lambda \left(
m-1\right) }$ respectively. \ Reduction with $M_{5}$ or $M_{8}$ yield the
invariant solution $\left( A+\lambda \left( m-1\right) t\right) ^{\frac{1}{%
1-m}}$ while $u=Ae^{-\lambda x}$ is obtained as a solution of $\left( \ref%
{34}\right) $ by using $M_{7}$.

\textbf{Case 3}. \ Two invariants of $M_{6}$ are $\psi =t$ and $u=\left( 
\frac{x}{t}+F\right) ^{\frac{1}{1-m}}$. \ These lead to the reduced equation 
$F^{\prime }+\frac{F}{\psi }+\lambda \left( 1-m\right) =0$ having solution $%
F=\left( \frac{A}{\psi }-\frac{\lambda \left( 1-m\right) }{2}\psi \right) $.
\ Thus the solution of $\left( \ref{34}\right) $ with $k=1-m$ is%
\begin{equation*}
u=\left( \frac{A+x}{t}-\frac{\lambda \left( 1-m\right) }{2}t\right) ^{\frac{1%
}{1-m}}\text{.}
\end{equation*}%
Solving the characteristic equations of $M_{7}$ give the invariants $\psi =t$
and $u=\left[ \left( 2x+at^{2}\right) F-at\right] ^{\frac{1}{1-m}}$ with the
second one obtained by solving the Bernoulli equation%
\begin{equation*}
\frac{du}{dx}-\frac{\lambda u}{a\left( 2x+at^{2}\right) }=\frac{\lambda
atu^{m}}{a\left( 2x+at^{2}\right) }
\end{equation*}%
where $a=\lambda \left( 1-m\right) $. \ The reduced equation obtained from
these invariants is $F^{\prime }+2F^{2}=0$ leading to the solution $F=\frac{1%
}{2\psi -A}$. \ Hence%
\begin{equation}
u=\left( \frac{2x+\lambda \left( 1-m\right) \left( A-t\right) t}{2t-A}%
\right) ^{\frac{1}{1-m}}\text{.}  \label{48}
\end{equation}%
Reduction with $M_{8}$ produced same result as that in $\left( \ref{48}%
\right) $ with $A=0$. \ $M_{8}$ also give additional solution $u=\left(
A-\lambda \left( 1-m\right) t\right) ^{\frac{1}{1-m}}$.\newline
We next use a linear combination of $M_{7}$ and $M_{8}$ to obtain a solution
of $\left( \ref{34}\right) $. \ The characteristic equations of $M_{7}+M_{8}$
are%
\begin{equation*}
\frac{dx}{2x}=\frac{dt}{t}=\left( 1-m\right) \frac{du}{u}
\end{equation*}%
with invariants $\psi =\frac{\sqrt{x}}{t}$, $u=\left( tF\right) ^{\frac{1}{%
1-m}}$. \ The reduced equation obtained is%
\begin{equation}
\left( F-2\psi ^{2}\right) F^{\prime }+2\psi \left( F-a\right) =0  \label{49}
\end{equation}%
where $a=\lambda \left( m-1\right) $. \ To solve this innocent looking
differential equation , we let 
\begin{equation}
F=\frac{a+2\psi ^{2}\left( z\left( \psi \right) -1\right) }{z\left( \psi
\right) }\text{.}  \label{50}
\end{equation}%
Thus 
\begin{eqnarray}
F^{\prime } &=&\frac{\left( 2\psi ^{2}-a\right) z^{\prime }+4\psi z\left(
z-1\right) }{z^{2}}\text{,}  \notag \\
a-F &=&\frac{\left( a-2\psi ^{2}\right) \left( z-1\right) }{z}\text{,}
\label{51} \\
F-2\psi ^{2} &=&\frac{a-2\psi ^{2}}{z}\text{.}  \notag
\end{eqnarray}%
Substituting $\left( \ref{50}\right) $ and $\left( \ref{51}\right) $ into $%
\left( \ref{49}\right) $ lead to the following seperable first order
differential equation%
\begin{equation*}
\frac{dz}{z\left( z-1\right) \left( z-2\right) }=\frac{2\psi d\psi }{a-2\psi
^{2}}
\end{equation*}%
having solution%
\begin{equation*}
\frac{z\left( z-2\right) }{\left( z-1\right) ^{2}\left( 2\psi ^{2}-a\right) }%
=A
\end{equation*}%
Without any lost of generality, we let $A=1$. \ Hence the solution of the
reduced equation $\left( \ref{49}\right) $ is%
\begin{equation*}
F^{2}-2\left( a+1\right) F+2\psi ^{2}+a\left( a+1\right) =0
\end{equation*}%
which is a quadratic equation in $F$ and so%
\begin{equation*}
F=\left( a+1\right) \pm \sqrt{a+1-2\psi ^{2}}\text{.}
\end{equation*}%
Therefore%
\begin{equation*}
u=\left[ \left[ \lambda \left( m-1\right) +1\right] t\pm \sqrt{\left[
\lambda \left( m-1\right) +1\right] t^{2}-2x}\right] ^{\frac{1}{1-m}}
\end{equation*}%
is the required invariant solution of $\left( \ref{34}\right) $ when $%
M_{7}+M_{8}$ is used as reducing generator.

The results for the remaning cases shall be presented in tabular form, $A$
is a constant wherever it appears.

\textbf{Case 4 }$\left( b=\lambda \left( m-1\right) \right) $.$\bigskip $

\begin{tabular}{|c|c|c|c|}
\hline
$\text{Subalgebra}$ & $\text{Invariants}$ & $\text{Reduced Equation}$ & $%
\text{Solution}$ \\ \hline
$M_{6}$ & $%
\begin{array}{c}
\psi =\frac{t}{x}-\frac{b}{4}x \\ 
u=\left( F+\frac{b}{2}x\right) ^{-\frac{2}{m-1}}%
\end{array}%
$ & $\left( F-\psi \right) F^{\prime }$ & $%
\begin{array}{c}
u=\left( \frac{t}{x}+\frac{b}{4}x\right) ^{-\frac{2}{m-1}} \\ 
u=\left( A+\frac{b}{4}x\right) ^{-\frac{2}{m-1}}%
\end{array}%
$ \\ \hline
$M_{7}$ & $%
\begin{array}{c}
\psi =\frac{b}{4}x^{2}-t \\ 
u=\left( \frac{F}{x}+\frac{b}{2}x\right) ^{-\frac{2}{m-1}}%
\end{array}%
$ & $F^{\prime }+1=0$ & $u=\left( \frac{A+t}{x}+\frac{b}{4}x\right) ^{-\frac{%
2}{m-1}}$ \\ \hline
$M_{6}-\frac{1}{2}M_{7}$ & $\psi =\frac{x^{2}}{t},u=\left( \frac{F}{x}%
\right) ^{\frac{2}{m-1}}$ & $\left( 2\psi F-\psi ^{2}\right) F^{\prime }+%
\frac{b}{2}F^{3}-F^{2}=0$ & - \\ \hline
\end{tabular}%
\bigskip

\textbf{Case 5 \& 6}.\bigskip

\begin{tabular}{|c|c|c|c|}
\hline
Case & $\text{Subalgebra}$ & $\text{Invariants}$ & $\text{Reduced Equation}$
\\ \hline
$5$ & $M_{3}$ & $\psi =\frac{\sqrt[3]{x}}{\sqrt{t}},u=\left( \sqrt{t}%
F\right) ^{\frac{2}{1-m}}$ & $\left( \frac{2F^{2}}{\psi ^{2}}-3\psi F\right)
F^{\prime }+3F^{2}+3\lambda \left( 1-m\right) =0$ \\ \hline
$6$ & $M_{3}$ & $\psi =\frac{\sqrt{x}}{\sqrt[3]{t}},u=\left( \frac{F}{\sqrt[3%
]{t}}\right) ^{\frac{3}{m-1}}$ & $\left( 3F-2\psi ^{2}\right) F^{\prime
}-2\psi F+2\lambda \left( m-1\right) \psi F^{4}=0$ \\ \hline
\end{tabular}%
\bigskip

\textbf{Case 7}.\bigskip

\begin{tabular}{|c|c|c|c|}
\hline
$\text{Subalgebra}$ & $\text{Invariants}$ & $\text{Reduced Equation}$ & $%
\text{Solution}$ \\ \hline
$M_{3}$ & $%
\begin{array}{c}
\psi =xe^{\lambda kt}, \\ 
u=\left( \frac{F}{x^{2}}-\frac{1}{\lambda kx}\right) ^{-\frac{1}{k}}%
\end{array}%
$ & $\psi F^{\prime }-F=0$ & $u=\left( \frac{Ae^{^{\lambda kt}}}{x}-\frac{1}{%
\lambda kx}\right) ^{-\frac{1}{k}}$ \\ \hline
$M_{4}$ & $\psi =x,u=\left( \frac{\lambda kxe^{-\lambda kt}F}{1-e^{-\lambda
kt}F}\right) ^{\frac{1}{k}}$ & $F^{\prime }=0$ & $u=\left( \frac{\lambda
Akxe^{-\lambda kt}}{1-Ae^{-\lambda kt}}\right) ^{\frac{1}{k}}$ \\ \hline
$M_{5}$ & $\psi =t,u=\left( F-\lambda kx\right) ^{\frac{1}{k}}$ & $F^{\prime
}=0$ & $u=\left( A-\lambda kx\right) ^{\frac{1}{k}}$ \\ \hline
$M_{6}$ & $\psi =t,u=\left( xF\right) ^{\frac{1}{k}}$ & $F^{\prime
}+F^{2}+\lambda kF=0$ & $u=\left( \frac{Bxe^{-\lambda kt}}{1-Be^{-\lambda kt}%
}\right) ^{\frac{1}{k}},B=\lambda ke^{\lambda kA}$ \\ \hline
$M_{7}$ & $\psi =x,u=\left( e^{-\lambda kt}F\right) ^{\frac{1}{k}}$ & $%
F^{\prime }=0$ & $u=\left( Ae^{-\lambda kt}\right) ^{\frac{1}{k}}$ \\ \hline
$M_{8}$ & $%
\begin{array}{c}
\psi =xe^{\lambda kt}, \\ 
u=\left( F-\lambda kx\right) ^{\frac{1}{k}}%
\end{array}%
$ & $F^{\prime }=0$ & $u=\left( A-\lambda kx\right) ^{\frac{1}{k}}$ \\ \hline
\end{tabular}%
\bigskip

Finally, we present results obtained through the generators $M_{1}=\partial
_{x}$ and $M_{2}=\partial _{t}$ which are known to have travelling waves
solution. \ The invariant of $c\partial _{x}+\partial _{t}$ is $x-ct$ for
any arbitrary constant $c\neq 0$. \ Since $\partial _{u}$ is missing, we can
only proceed by letting $u=F\left( \psi \right) $ where $\psi =x-ct$, hence $%
u_{t}=-cF^{\prime }$ and $u_{x}=F^{\prime }$. \ The obtained results are as
presented in the following table.\bigskip \newline
\begin{tabular}{|c|c|c|}
\hline
Case & $\text{Reduced Equation}$ & $\text{Solution}$ \\ \hline
1 & $\left( F^{k}-c\right) F^{\prime }+\lambda F^{m}=0$ & $u^{1-m}=\left( 
\frac{u^{k}}{k-m+1}-\frac{c}{1-m}\right) =A-\left( x-ct\right) $ \\ \hline
2 & $\left( F^{m-1}-c\right) F^{\prime }+\lambda F^{m}=0$ & $%
u^{1-m}e^{-cu^{1-m}}=Ae^{-\lambda \left( 1-m\right) \left( x-ct\right) }$ \\ 
\hline
3 & $\left( F^{m-1}-c\right) F^{\prime }+\lambda F^{m}=0$ & $u^{1-m}=c\pm 
\sqrt{c^{2}-2\lambda \left( 1-m\right) \left( x-ct\right) }$ \\ \hline
4 & $\left( F^{\frac{m-1}{2}}-c\right) F^{\prime }+\lambda F^{m}=0$ & $u^{%
\frac{m-1}{2}}=\frac{1\pm \sqrt{1-c\left( 1-m\right) \left[ A-\lambda \left(
x-ct\right) \right] }}{c}$ \\ \hline
5 & $\left( F^{\frac{1-m}{2}}-c\right) F^{\prime }+\lambda F^{m}=0$ & $2u^{%
\frac{3\left( 1-m\right) }{2}}-3cu^{1-m}=3\left( 1-m\right) \left[ A-\lambda
\left( x-ct\right) \right] $ \\ \hline
6 & $\left( F^{\frac{m-1}{3}}-c\right) F^{\prime }+\lambda F^{m}=0$ & $3u^{%
\frac{2\left( 1-m\right) }{3}}-2cu^{1-m}=2\left( 1-m\right) \left[ A-\lambda
\left( x-ct\right) \right] $ \\ \hline
7 & $\left( F^{k}-c\right) F^{\prime }+\lambda F=0$ & $%
u^{-ck}e^{u^{k}}=Ae^{-\lambda k\left( x-ct\right) }$ \\ \hline
\end{tabular}%
\bigskip

\begin{remark}
All the invariant solutions obtained in this section can be subjected to the
transformations of the previous section to generate new solutions.
\end{remark}

\section{Conclusion}

In this paper, we have used symmetry analysis to perform classifications and
subsequently, exhibit many invariant solutions for the damped inviscid
Burger's equation $\left( \ref{34}\right) $. \ Though various researchers
have used this equation to model important physical phenomena, I have search
the literature but did not see any work that deal with its analysis using
Lie method as presented in this paper, hence this is the first among the
series of work dedicated to the analysis of equation $\left( \ref{34}\right) 
$.\newpage

\end{document}